\newcommand{\email}[1]{\href{mailto:#1}{#1}}
\newtheorem{theorem}{Theorem}
\newtheorem{proposition}[theorem]{Proposition}
\newtheorem{lemma}[theorem]{Lemma}
\newtheorem{corollary}[theorem]{Corollary}
\theoremstyle{remark}
\newtheorem{remark}[theorem]{Remark}
\theoremstyle{definition}
\renewcommand{\vec}[1]{\boldsymbol{#1}}    
\newcommand{\tens}[1]{\boldsymbol{#1}}
\newcommand{\trans}{^\top}
\DeclareMathOperator{\tr}{tr}
\DeclareMathOperator{\card}{card}
\newcommand{\SCAL}{{\cdot}}
\newcommand{\SSCAL}{{:}}
\newcommand{\GRAD}{\vec{\nabla}}
\newcommand{\GRADh}{\vec{\nabla}_h}
\newcommand{\DIV}{\vec{\nabla}{\cdot}}
\newcommand{\GRADs}{\tens{\nabla}_{\rm s}}
\newcommand{\GRADsh}{\tens{\nabla}_{{\rm s},h}}
\newcommand{\st}{\; : \;}
\newcommand{\Id}[1][d]{\tens{I}_{#1}}
\newcommand{\norm}[2][]{\|#2\|_{#1}}
\newcommand{\seminorm}[2][]{|#2|_{#1}}
\newcommand{\vvvert}{\vert\kern-0.25ex\vert\kern-0.25ex\vert}
\newcommand{\tnorm}[2][]{\vvvert #2\vvvert_{#1}}
\newcommand{\meas}[1]{|#1|}
\newcommand{\term}{\mathfrak{T}}
\newcommand{\Real}{\mathbb{R}}
\newcommand{\Poly}[2][]{\mathbb{P}_{#1}^{#2}}
\newcommand{\Mh}[1][h]{\mathcal{M}_{#1}}
\newcommand{\Th}[1][h]{\mathcal{T}_{#1}}
\newcommand{\Fh}[1][h]{\mathcal{F}_{#1}}
\newcommand{\Fhi}{\Fh^{{\rm i}}}
\newcommand{\Fhb}{\Fh^{{\rm b}}}
\newcommand{\fTh}[1][h]{\mathfrak{T}_{#1}}
\newcommand{\fFh}[1][h]{\mathfrak{F}_{#1}}
\newcommand{\normal}{\vec{n}}
\newcommand{\lproj}[2][h]{\pi_{#1}^{#2}}
\newcommand{\vlproj}[2][h]{\vec{\pi}_{#1}^{#2}}
\newcommand{\tlproj}[2][h]{\tens{\pi}_{#1}^{#2}}
\newcommand{\eproj}[2][h]{\varpi_{#1}^{#2}}
\newcommand{\veproj}[2][h]{\vec{\varpi}_{#1}^{#2}}
\newcommand{\Iav}[1][1]{I_{{\rm av},h}^{#1}}
\newcommand{\vIav}[1][1]{\vec{I}_{{\rm av},h}^{#1}}
\newcommand{\Cerr}[2]{\mathcal{E}_{h}({#1};{#2})}
\newcommand{\uvec}[1]{\underline{\vec{#1}}}
\newcommand{\vUT}[1][]{\uvec{U}_T^{#1}}
\newcommand{\vIT}[1][]{\uvec{I}_T^{#1}}
\newcommand{\vUh}[1][]{\uvec{U}_h^{#1}}
\newcommand{\vIh}[1][]{\uvec{I}_h^{#1}}
\newcommand{\vUhD}[1][]{\uvec{U}_{h,0}^{#1}}
\newcommand{\strain}{\tens{\varepsilon}}
\newcommand{\vpT}[1][1]{\vec{p}_T^{#1}}
\newcommand{\vph}[1][1]{\vec{p}_h^{#1}}
\newcommand{\jump}[2][F]{[#2]_{#1}}
\newcommand{\Ndofs}[1][h]{N_{{\rm dofs},#1}}
\newcommand{\Nnz}[1][h]{N_{{\rm nz},#1}}
\title{A low-order nonconforming method for linear elasticity on general meshes}
\author[1]{Michele Botti\footnote{\email{bottieaffini@gmail.com}}}
\author[1]{Daniele A. Di Pietro\footnote{\email{daniele.di-pietro@umontpellier.fr}}}
\author[1,2]{Alessandra Guglielmana\footnote{\email{alessandra.guglielmana@mail.polimi.it}}}
\affil[1]{IMAG, Univ Montpellier, CNRS, Montpellier, France}
\affil[2]{Politecnico di Milano, 20133 Milano, Italy}
\begin{document}
\maketitle

\begin{abstract}
  In this work we construct a low-order nonconforming approximation method for linear elasticity problems supporting general meshes and valid in two and three space dimensions.
  The method is obtained by hacking the Hybrid High-Order method of \cite{Di-Pietro.Ern:15}, that requires the use of polynomials of degree $k\ge 1$ for stability.
  Specifically, we show that coercivity can be recovered for $k=0$ by introducing a novel term that penalises the jumps of the displacement reconstruction across mesh faces.
  This term plays a key role in the fulfillment of a discrete Korn inequality on broken polynomial spaces, for which a novel proof valid for general polyhedral meshes is provided.
  Locking-free error estimates are derived for both the energy- and the $L^2$-norms of the error, that are shown to convergence, for smooth solutions, as $h$ and $h^2$, respectively (here, $h$ denotes the meshsize).
  A thorough numerical validation on a complete panel of two- and three-dimensional test cases is provided.
  \medskip\\
  \textbf{Key words.}
  Linear elasticity, 
  Korn's inequality,
  locking-free methods,
  Hybrid High-Order methods,
  polyhedral meshes
  \medskip\\
  \textbf{AMS subject classification.} 65N08, 65N30, 74B05, 74G15
\end{abstract}


\section{Introduction}

Discretisation methods supporting meshes with general, possibly non standard, element shapes have experienced a vigorous growth over the last few years.
In the context of solid-mechanics, this feature can be useful for several reasons including, e.g., improved robustness to mesh distortion and fracture, local mesh refinement, or the use of hanging nodes for contact and interface problems.
A non-exahustive list of contributions in the context of elasticity problems includes \cite{Hughes.Cottrell.ea:05,Tabarraei.Sukumar:06,Beirao-Da-Veiga:10,Beirao-da-Veiga.Brezzi.ea:13*1,Droniou.Lamichhane:15,Gain.Talischi.ea:14,Di-Pietro.Lemaire:15,Di-Pietro.Ern:15,Botti.Di-Pietro.ea:17,Artioli.Beirao-da-Veiga.ea:17,Koyama.Kikuchi:17,Cockburn.Fu:18,Sevilla.Giacomini.ea:18,Caceres.Gatica.ea:19}; see also references therein.

For large three-dimensional simulations, or whenever one cannot expect the exact solution to be smooth, low-order methods are often privileged in order to reduce the number of unknowns.
It is well-known, however, that low-order Finite Element (FE) approximations are in some cases unsatisfactory:
affine conforming FE methods are not robust in the quasi-incompressible limit owing to their inability to represent non-trivial divergence-free displacement fields;
nonconforming (Crouzeix--Raviart) FE methods, on the other hand, yield unstable discretisations unless appropriate measures are taken; see, e.g., the discussions in \cite{Brenner.Sung:92,Hansbo.Larson:03}.
The underlying reason for this lack of stability is the non-fulfillment of a discrete counterpart of Korn's inequality owing to a poor control of rigid-body motions at mesh faces.
For similar reasons, the stability of Hybrid High-Order (HHO) methods for linear elasticity requires the use of polynomials of degree $k\ge 1$ as unknowns; see \cite[Lemma 4]{Di-Pietro.Ern:15}.
As a matter of fact, as we show in Section \ref{sec:comparison.hho} below, the stability and consistency requirements on the local HHO stabilisation term are incompatible when $k=0$, that is, when piecewise constant polynomials on the mesh and its skeleton are used as discrete unknowns.

In this paper we highlight a modification of the HHO method which recovers stability for $k=0$.
The proposed fix consists in adding a novel term which penalises in a least square sense the jumps of the local affine displacement reconstruction.
This modification is inspired by the Korn inequality on broken polynomial spaces proved in Lemma \ref{lem:korn} below, which appears to be a novel extension of similar results to general polyhedral meshes.
The proof combines the techniques of \cite[Lemma 2.2]{Brenner:03} with the recent results of \cite{Di-Pietro.Ern:12} and \cite{Di-Pietro.Droniou:17} concerning, respectively, the node-averaging operator and local inverse inequalities on polyhedral meshes.
In the context of Crouzeix--Raviart FE approximations of linear elasticity problems on standard meshes, similar jump penalisation terms have been considered in \cite{Hansbo.Larson:03}.

The resulting method has several appealing features:
it is valid in two and three space dimensions, paving the way to unified implementations;
it hinges on a reduced number of unknowns ($15$ for a tetrahedron, $21$ for a hexahedron and, for more general polyhedral shapes, $3$ unknowns per face plus $3$ unknowns inside the element);
it is robust in the quasi-incompressible limit;
it admits a formulation in terms of conservative numerical tractions, which enables its integration in existing Finite Volume simulators (a particularly relevant feature in the context of industrial applications).

We carry out a complete convergence analysis based on the abstract framework of \cite{Di-Pietro.Droniou:18} for methods in fully discrete formulation.
Specifically, we show that the energy and $L^2$-norms of the error converge, respectively, as $h$ and $h^2$ (with $h$ denoting, as usual, the meshsize).
As for the original HHO method of \cite{Di-Pietro.Ern:15}, the error estimates are additionally shown to be robust in the quasi-incompressible limit.
Key to this result is the fact that the gradient of the local displacement reconstruction satisfies a suitable commutation property with the $L^2$-orthogonal projector.
The theoretical results are supported by a thorough numerical investigation, including two- and three-dimensional test cases, as well as a comparison with the original HHO method of \cite{Di-Pietro.Ern:15} on a test case mimicking a mode 1 fracture.

The rest of the paper is organised as follows.
In Section \ref{sec:continuous.setting} we formulate the continuous problem along with the assumptions on the problem data.
In Section \ref{sec:discrete.setting} we establish the discrete setting: after briefly recalling the notion of regular polyhedral mesh, we introduce local and broken polynomial spaces and projectors thereon, and we prove a discrete counterpart of Korn's first inequality on broken polynomial spaces.
In Section \ref{sec:discretisation} we introduce the space of discrete unknowns, define a local affine displacement reconstruction, formulate the discrete bilinear form, discuss the differences with respect to the original HHO bilinear form of \cite{Di-Pietro.Ern:15}, and state the discrete problem.
Section \ref{sec:convergence} addresses the convergence analysis of the method in the energy- and $L^2$-norms, while Section \ref{sec:numerical.tests} contains an exhaustive panel of two- and three-dimensional numerical tests.
Finally, in Section \ref{sec:flux} we show that the method satisfies local balances with equilibrated tractions, for which an explicit expression is provided.


\section{Continuous setting}\label{sec:continuous.setting}

Consider a body which, in its reference configuration, occupies a given region of space $\Omega\subset\Real^d$, $d\in\{2,3\}$.
In what follows, it is assumed that $\Omega$ is a bounded connected open polygonal (if $d=2$) or polyhedral (if $d=3$) set that does not have cracks, i.e., it lies on one side of its boundary $\partial\Omega$.
We are interested in  finding the displacement field $\vec{u}:\Omega\to\Real^d$ of the body when it is subjected to a given force per unit volume $\vec{f}:\Omega\to\Real^d$.
We work in what follows under the small deformation assumption which implies, in particular, that the strain tensor $\strain$ is given by the symmetric part of the gradient of the displacement field, i.e., $\strain=\GRADs\vec{u}$ where, for any vector-valued function $\vec{z}=(z_i)_{1\le i\le d}$ smooth enough, we have set $\GRAD\vec{z}=(\partial_jz_i)_{1\le i,j\le d}$ and $\GRADs\vec{z}\coloneq\frac12\left(\GRAD\vec{z}+\GRAD\vec{z}\trans\right)$.
We further assume, for the sake of simplicity, that the body is clamped along its boundary $\partial\Omega$.
Other standard boundary conditions can be considered up to minor modifications.
The displacement field is obtained by solving the following linear elasticity problem, which expresses the equilibrium between internal stresses and external loads:
Find $\vec{u}:\Omega\to\Real^d$ such that
\begin{subequations}\label{eq:strong}
  \begin{alignat}{2}
    -\DIV(\tens{\sigma}(\GRADs\vec{u})) &= \vec{f}&\qquad&\text{in $\Omega$},\label{eq:strong:pde}
    \\
    \vec{u} &= \vec{0}&\qquad&\text{on $\partial\Omega$},\label{eq:strong:bc}
  \end{alignat}
\end{subequations}
where, denoting by $\Real_{\rm sym}^{d\times d}$ the set of symmetric real-valued $d\times d$ matrices, the mapping $\tens{\sigma}:\Real_{\rm sym}^{d\times d}\to\Real_{\rm sym}^{d\times d}$ represents the strain-stress law.
For isotropic homogeneous materials, the strain-stress law is such that, for any $\tens{\tau}\in\Real_{\rm sym}^{d\times d}$,
\begin{equation}\label{eq:sigma}
  \tens{\sigma}(\tens{\tau}) = 2\mu\tens{\tau} + \lambda\tr(\tens{\tau})\Id,
\end{equation}
where $\tr(\tens{\tau})\coloneq\sum_{i=1}^d\tau_{ii}$ is the trace operator and $\Id$ the $d\times d$ identity matrix.
The real numbers $\mu$ and $\lambda$, which correspond to the Lam\'e coefficients when $d=3$, are assumed such that, for a real number $\alpha>0$,
\begin{equation}\label{eq:lambda.mu.bounds}
  2\mu - d\lambda^-\ge\alpha,
\end{equation}
where $\lambda^-\coloneq\frac12\left(|\lambda|-\lambda\right)$ denotes the negative part of $\lambda$.
In what follows, $\mu$, $\lambda$, the related bound \eqref{eq:lambda.mu.bounds}, and $\vec{f}$ will be collectively referred to as the problem data.

For any open bounded set $X\subset\Omega$, we denote by $({\cdot},{\cdot})_X$ the usual inner product of the space of scalar-valued, square-integrable functions $L^2(X;\Real)$, by $\norm[X]{{\cdot}}$ the corresponding norm, and we adopt the convention that the subscript is omitted whenever $X=\Omega$.
The same notation is used for the spaces of vector- and tensor-valued square-integrable functions $L^2(X;\Real^d)$ and $L^2(X;\Real^{d\times d})$, respectively.
With this notation, a classical weak formulation of problem \eqref{eq:strong} reads:
Find $\vec{u}\in H_0^1(\Omega;\Real^d)$ such that
\begin{equation}\label{eq:weak}
  (\tens{\sigma}(\GRADs\vec{u}),\GRADs\vec{v}) = (\vec{f},\vec{v})
  \qquad\forall\vec{v}\in H_0^1(\Omega;\Real^d),
\end{equation}
where $H_0^1(\Omega;\Real^d)$ classically denotes the space of vector-valued functions that are square-integrable along with all their partial derivatives, and whose traces on $\partial\Omega$ vanish.


\section{Discrete setting}\label{sec:discrete.setting}

\subsection{Mesh}

Throughout the rest of the paper, we will use for the sake of simplicity the three-dimensional nomenclature also when $d=2$, i.e., we will speak of polyhedra and faces rather than polygons and edges.
We consider here meshes corresponding to couples $\Mh\coloneq(\Th,\Fh)$, where $\Th$ is a finite collection of polyhedral elements $T$ such that $h\coloneq\max_{T\in\Th}h_T>0$ with $h_T$ denoting the diameter of $T$, while $\Fh$ is a finite collection of planar faces $F$. It is assumed henceforth that the mesh $\Mh$ matches the geometrical requirements detailed in \cite[Definition 7.2]{Droniou.Eymard.ea:18}; see also~\cite[Section 2]{Di-Pietro.Tittarelli:18}.
This covers, essentially, any reasonable partition of $\Omega$ into polyhedral sets, not necessarily convex or even star-shaped.
For every mesh element $T\in\Th$, we denote by $\Fh[T]$ the subset of $\Fh$ containing the faces that lie on the boundary $\partial T$ of $T$.
Symmetrically, for every face $F\in\Fh$, we denote by $\Th[F]$ the subset of $\Th$ containing the (one or two) mesh elements that share $F$.
For any mesh element $T\in\Th$ and each face $F\in\Fh[T]$, $\normal_{TF}$ is the constant unit normal
vector to $F$ pointing out of $T$.
Boundary faces lying on $\partial\Omega$ and internal faces contained in
$\Omega$ are collected in the sets $\Fhb$ and $\Fhi$, respectively.
For any $F\in\Fhi$, we denote by $T_1$ and $T_2$ the elements of $\Th$ such that $F\subset\partial T_1\cap\partial T_2$.
The numbering of $T_1$ and $T_2$ is assumed arbitrary but fixed, and we set $\normal_F\coloneq\normal_{T_1F}$.
Our focus is on the $h$-convergence analysis, so we consider a sequence of refined meshes that is regular in the sense of~\cite[Definition~3]{Di-Pietro.Tittarelli:18}.  This implies, in particular, that the diameter $h_T$ of a mesh element $T\in\Th$ is comparable to the diameter $h_F$ of each face $F\in\Fh[T]$ uniformly in $h$, and that the number of faces in $\Fh[T]$ is bounded above by an integer $N_\partial$ independent of $h$.

\subsection{Local and broken spaces and projectors}

In order to alleviate the exposition, throughout the rest of the paper we use the abridged notation $a\lesssim b$ for the inequality $a\le Cb$ with real number $C>0$ independent of the meshsize, possibly on the problem data, and, for local inequalities, on the mesh element or face.
We also write $a\simeq b$ for $a\lesssim b$ and $b \lesssim a$.
The dependencies of the hidden constant are further specified whenever needed.

Let $X$ denote a mesh element or face.
For a given integer $l\ge 0$, we denote by $\Poly{l}(X;\Real)$ the space spanned by the restriction to $X$ of $d$-variate, real-valued polynomials of total degree $\le l$.
The corresponding spaces of vector- and tensor-valued functions are respectively denoted by $\Poly{l}(X;\Real^d)$ and $\Poly{l}(X;\Real^{d\times d})$.
A similar notation is used also for the vector and tensor versions of the broken spaces introduced in what follows.
At the global level, we denote by $\Poly{l}(\Th;\Real)$ the space of broken polynomials on $\Th$ whose restriction to every mesh element $T\in\Th$ lies in $\Poly{l}(T;\Real)$, i.e.,
$$
\Poly{l}(\Th;\Real)\coloneq\left\{v_h\in L^2(\Omega;\Real)\st v_{h|T}\in\Poly{l}(T;\Real)\quad\forall T\in\Th\right\}.
$$
We also introduce the broken Sobolev spaces
$$
H^s(\Th;\Real)\coloneq\left\{
v\in L^2(\Omega;\Real)\st v_{|T}\in H^s(T;\Real)\quad\forall T\in\Th
\right\},
$$
which will be used in the error estimates to express the regularity requirements on the exact solution. 
On $H^s(\Th;\Real)$, we define the broken seminorm 
$$
  \seminorm[H^s(\Th;\Real)]{v}\coloneq\left(
  \sum_{T\in\Th}\seminorm[H^s(T;\Real)]{v}^2
  \right)^{\frac12}.
$$

Again denoting by $X$ a mesh element or face, the local $L^2$-orthogonal projector $\lproj[X]{0}:L^2(X;\Real)\to\Poly{0}(X;\Real)$ maps every $v\in L^2(X;\Real)$ onto the constant function equal to its mean value inside $T$, that is,
\begin{equation}\label{eq:lproj}
  \lproj[X]{0}v\coloneq\frac{1}{\meas{X}}\int_X v,
\end{equation}
with $\meas{X}$ denoting the Hausdorff measure of $X$.
The vector and tensor versions of the $L^2$-projector, both denoted by $\vlproj[X]{0}$, are obtained applying $\lproj[X]{0}$ component-wise.
From \cite[Lemmas 3.4 and 3.6]{Di-Pietro.Droniou:17}, it can be deduced that, for any mesh element $T\in\Th$ and any function $v\in H^1(T;\Real)$, the following approximation properties hold:
\begin{equation}\label{eq:lproj:approx}
  \norm[L^2(T;\Real)]{v-\lproj[T]{0}v}
  + h_T^{\frac12}\norm[L^2(\partial T;\Real)]{v-\lproj[T]{0}v}
  \lesssim h_T\seminorm[H^1(T;\Real)]{v},
\end{equation}
where $\partial T$ denotes the boundary of $T$ and the hidden constant is independent of $h$, $T$, and $v$.
The global $L^2$-orthogonal projector $\lproj{0}:L^2(\Omega;\Real)\to\Poly{0}(\Th;\Real)$ is such that, for any $v\in L^2(\Omega;\Real^d)$,
\begin{equation}\label{eq:lproj.h}
  (\lproj{0} v)_{|T}\coloneq\lproj[T]{0} v_{|T}\qquad\forall T\in\Th.
\end{equation}
The vector and tensor versions, both denoted by $\vlproj{0}$, are obtained applying $\lproj{0}$ component-wise.

We will also need the elliptic projector $\eproj[T]{1}:H^1(T;\Real)\to\Poly{1}(T;\Real)$ such that, for all $v\in H^1(T;\Real)$,
\begin{equation}\label{eq:eproj}
  \GRAD\eproj[T]{1}v = \vlproj[T]{0}(\GRAD v)\mbox{ and }
  \frac{1}{\meas{T}}\int_T\eproj[T]{1}v = \frac{1}{\meas{T}}\int_T v.
\end{equation}
The first relation makes sense since $\GRAD\Poly{1}(T;\Real)=\Poly{0}(T;\Real^d)$, and it defines $\eproj[T]{1}v$ up to a constant, which is then fixed by the second relation.
Also in this case, the vector version $\veproj[T]{1}$ of the projector is obtained applying the scalar version component-wise.
The following approximation properties for the elliptic projector are a special case of \cite[Theorems 1.1 and 1.2]{Di-Pietro.Droniou:17*1}:
For all $T\in\Th$ and all $v\in H^2(T;\Real)$,
\begin{equation}\label{eq:eproj:approx}
  \norm[L^2(T;\Real)]{v-\eproj[T]{1}v}
  + h_T^{\frac12}\norm[L^2(\partial T;\Real)]{v-\eproj[T]{1}v}
  \lesssim h_T^2\seminorm[H^2(T;\Real)]{v},
\end{equation}
where the hidden constant is independent of $h$, $T$, and $v$.
For further use, we also define the global elliptic projector $\eproj{1}:H^1(\Th;\Real)\to\Poly{1}(\Th;\Real)$ such that, for any $v\in H^1(\Th;\Real)$,
$$
(\eproj{1} v)_{|T}\coloneq\eproj[T]{1} v_{|T}\qquad\forall T\in\Th.
$$
The vector version $\veproj{1}$ of the global elliptic projector is obtained applying $\eproj{1}$ component-wise.

\subsection{Discrete Korn inequality on broken polynomial spaces}

The stability of our method hinges on a discrete counterpart of Korn's inequality in discrete polynomial spaces stating that the $H^1$-seminorm of a vector-valued broken polynomial function is controlled by a suitably defined strain norm.
The goal of this section is to prove this inequality.

Let us start with some preliminary results.
Recalling that, for any $F\in\Fhi$, we have denoted by $T_1$ and $T_2$ the elements sharing $F$ and assumed that the ordering is arbitrary but fixed, we introduce the jump operator such that, for any function $v$ smooth enough to admit a (possibly two-valued) trace on $F$,
\begin{subequations}\label{eq:jump}
  \begin{equation}
    \jump{v}\coloneq (v_{|T_1})_{|F} - (v_{|T_2})_{|F}.
  \end{equation}
  This operator is extended to boundary faces $F\in\Fhb$ by setting
  \begin{equation}
    \jump{v}\coloneq v_{|F}.
  \end{equation}
\end{subequations}
When applied to vector-valued functions, the jump operator acts componentwise.

Let now $\fTh$ denote a matching simplicial submesh of $\Mh$ in the sense of \cite[Definition 4.2]{Di-Pietro.Tittarelli:18}, and let $\fFh$ be the corresponding set of simplicial faces.
Given an integer $l\ge 1$, we define the node-averaging operator $\Iav[l]:\Poly{l}(\Th;\Real)\to\Poly{l}(\Th;\Real)\cap H_0^1(\Omega)$ such that, for any function $v_h\in\Poly{l}(\Th;\Real)$ and any Lagrange node $V$ of $\fTh$, denoting by $\fTh[V]$ the set of simplices sharing $V$,
$$
(\Iav[l] v_h)(V)\coloneq\begin{cases}
\dfrac{1}{\card(\fTh[V])}\displaystyle\sum_{\tau\in\fTh[V]} (v_h)_{|\tau}(V) & \text{if $V\in\Omega$},
\\
0 & \text{if $V\in\partial\Omega$}.
\end{cases}
$$
The vector-version, denoted by $\vIav[l]$, acts component-wise.
Adapting the reasoning of \cite[Section 5.5.2]{Di-Pietro.Ern:12} (based in turn on \cite{Karakashian.Pascal:03}), we infer that it holds, for all $T\in\Th$, 
\begin{equation}\label{eq:est.Iav.l2}
  \norm[T]{v_h-\Iav[l] v_h}^2\lesssim\sum_{F\in\Fh[\mathcal{V},T]}h_F\norm[F]{\jump{v_h}}^2,
\end{equation}
where $\Fh[\mathcal{V},T]$ denotes the set of faces whose closure has nonempty intersection with the closure of $T$ and the hidden constant is independent of $h$, $T$, and $v_h$.
Combining this result with an inverse inequality (see \cite[Remark A.2]{Di-Pietro.Droniou:17}) we obtain, with hidden constants as before,
$$
\begin{aligned}
  \seminorm[H^1(\Th;\Real)]{v_h-\Iav[l] v_h}^2
  &\lesssim\sum_{T\in\Th} h_T^{-2}\norm[T]{v_h-\Iav[l] v_h}^2
  \\
  &\lesssim\sum_{T\in\Th} h_T^{-2}\sum_{F\in\Fh[\mathcal{V},T]}h_F\norm[F]{\jump{v_h}}^2
  \\
  &\lesssim\sum_{F\in\Fh}\sum_{T\in\Th[\mathcal{V},F]}h_F^{-1}\norm[F]{\jump{v_h}}^2,
\end{aligned}
$$
where we have used \eqref{eq:est.Iav.l2} to pass to the second line while, to pass to the third line, we have invoked the mesh regularity to write $h_F h_T^{-2}\lesssim h_F^{-1}$ and we have exchanged the order of the sums after introducing the notation $\Th[\mathcal{V},F]$ for the set of mesh elements whose closure has nonzero intersection with the closure of $F$.
Using again mesh regularity to infer that $\card(\Th[\mathcal{V},F])$ is bounded uniformly in $h$, we arrive at
\begin{equation}\label{eq:est.Iav.h1}
  \seminorm[H^1(\Th;\Real)]{v_h-\Iav[l] v_h}^2\lesssim\sum_{F\in\Fh}h_F^{-1}\norm[F]{\jump{v_h}}^2.
\end{equation}
We are now ready to prove the discrete Korn inequality.
\begin{lemma}[Discrete Korn inequality]\label{lem:korn}
  Let an integer $l\ge1$ be fixed and set, for all $\vec{v}_h\in\Poly{l}(\Th;\Real^d)$,
   \begin{equation}\label{eq:norm.dG}
    \norm[\strain,h]{\vec{v}_h}\coloneq\left(
    \norm{\GRADsh\vec{v}_h}^2
    + \seminorm[{\rm j},h]{\vec{v}_h}^2
    \right)^{\frac12}
    \mbox{ and }
    \seminorm[\mathrm{j},h]{\vec{v}_h}\coloneq\left(
    \sum_{F\in\Fh}h_F^{-1}\norm[F]{\jump{\vec{v}_h}}^2
    \right)^{\frac12},
  \end{equation}
  where $\GRADsh:H^1(\Th;\Real^d)\to L^2(\Omega;\Real_{\rm sym}^{d\times d})$ is the broken symmetric gradient such that $(\GRADsh\vec{v})_{|T}=\GRADs\vec{v}_{|T}$ for any $T\in\Th$.
  Then, for all $\vec{v}_h\in\Poly{l}(\Th;\Real^d)$, it holds with hidden constant depending only on $\Omega$, $d$, and the mesh regularity parameter:
  \begin{equation}\label{eq:korn}
    \seminorm[H^1(\Th;\Real^d)]{\vec{v}_h}\lesssim\norm[\strain,h]{\vec{v}_h}.
  \end{equation}
\end{lemma}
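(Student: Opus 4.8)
The plan is to follow the node-averaging strategy of \cite[Lemma 2.2]{Brenner:03}, leveraging the conforming interpolant $\vIav[l]$ to reduce the broken inequality to the classical (continuous) Korn inequality on $H_0^1(\Omega;\Real^d)$. Set $\vec{w}_h\coloneq\vIav[l]\vec{v}_h\in\Poly{l}(\Th;\Real^d)\cap H_0^1(\Omega;\Real^d)$ and split, by the triangle inequality,
\[
  \seminorm[H^1(\Th;\Real^d)]{\vec{v}_h}\le\seminorm[H^1(\Th;\Real^d)]{\vec{w}_h}+\seminorm[H^1(\Th;\Real^d)]{\vec{v}_h-\vec{w}_h}.
\]
The second (nonconforming) term is controlled directly by the jump seminorm: since $\seminorm[\mathrm{j},h]{\vec{v}_h}^2=\sum_{F\in\Fh}h_F^{-1}\norm[F]{\jump{\vec{v}_h}}^2$, estimate \eqref{eq:est.Iav.h1} gives $\seminorm[H^1(\Th;\Real^d)]{\vec{v}_h-\vec{w}_h}\lesssim\seminorm[\mathrm{j},h]{\vec{v}_h}$.

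It remains to bound the first (conforming) term. Since $\vec{w}_h\in H_0^1(\Omega;\Real^d)$, its broken and genuine $H^1$-seminorms coincide, so the classical Korn first inequality applies and yields $\seminorm[H^1(\Th;\Real^d)]{\vec{w}_h}=\seminorm[H^1(\Omega;\Real^d)]{\vec{w}_h}\lesssim\norm{\GRADs\vec{w}_h}$, with hidden constant depending only on $\Omega$ and $d$. Again by conformity one has $\GRADs\vec{w}_h=\GRADsh\vec{w}_h$, whence, inserting $\pm\vec{v}_h$ and using that the symmetric gradient is dominated by the full gradient,
\[
  \norm{\GRADs\vec{w}_h}\le\norm{\GRADsh\vec{v}_h}+\norm{\GRADsh(\vec{w}_h-\vec{v}_h)}\le\norm{\GRADsh\vec{v}_h}+\seminorm[H^1(\Th;\Real^d)]{\vec{w}_h-\vec{v}_h}.
\]
Invoking \eqref{eq:est.Iav.h1} once more for the last summand, the conforming term is bounded by $\norm{\GRADsh\vec{v}_h}+\seminorm[\mathrm{j},h]{\vec{v}_h}$.

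Collecting the two estimates gives $\seminorm[H^1(\Th;\Real^d)]{\vec{v}_h}\lesssim\norm{\GRADsh\vec{v}_h}+\seminorm[\mathrm{j},h]{\vec{v}_h}$, which is exactly \eqref{eq:korn} after recalling the definition \eqref{eq:norm.dG} of $\norm[\strain,h]{\cdot}$. I expect no genuine obstacle in this final assembly: the analytical difficulty has already been absorbed into \eqref{eq:est.Iav.h1}, i.e.\ into the $H^1$-stability of the node-averaging operator on general polyhedral meshes, which is where the results of \cite{Di-Pietro.Ern:12} and \cite{Di-Pietro.Droniou:17} (the node-averaging estimate and the local inverse inequality, respectively) do the heavy lifting. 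The one point requiring care is that $\vIav[l]$ lands in $H_0^1(\Omega;\Real^d)$, so that the clamped continuous Korn inequality is legitimately applicable, and that the hidden constants remain independent of $h$ and of $\vec{v}_h$.
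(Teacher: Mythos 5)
Your proposal is correct and follows essentially the same route as the paper's proof: decompose via the node-averaging operator $\vIav[l]$, apply the continuous first Korn inequality to the conforming part, and absorb both the nonconforming remainder and the symmetric-gradient correction using the $H^1$-stability estimate \eqref{eq:est.Iav.h1}. The only difference is cosmetic (you work with norms rather than squared norms), and your remark that $\vIav[l]\vec{v}_h\in H_0^1(\Omega;\Real^d)$ is exactly the point that legitimises the clamped Korn inequality, as in the paper.
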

\begin{proof}
  The proof adapts the arguments of \cite[Lemma 2.2]{Brenner:03}.
  We can write
  $$
  \begin{aligned}
    \seminorm[H^1(\Th;\Real^d)]{\vec{v}_h}^2
    &\lesssim\seminorm[H^1(\Omega;\Real^d)]{\vIav[l]\vec{v}_h}^2 + \seminorm[H^1(\Th;\Real^d)]{\vec{v}_h-\vIav[l]\vec{v}_h}^2
    \\
    &\lesssim\norm{\GRADs\vIav[l]\vec{v}_h}^2 + \seminorm[\mathrm{j},h]{\vec{v}_h}^2
    \\
    &\lesssim\norm{\GRADsh\vec{v}_h}^2 + \norm{\GRADsh(\vIav[l]\vec{v}_h-\vec{v}_h)}^2 + \seminorm[\mathrm{j},h]{\vec{v}_h}^2
    \\
    &\lesssim\norm{\GRADsh\vec{v}_h}^2 + \seminorm[\mathrm{j},h]{\vec{v}_h}^2
    =\norm[\strain,h]{\vec{v}_h}^2,
  \end{aligned}
  $$
  where we have inserted $\pm\vIav[l]\vec{v}_h$ into the seminorm and used a triangle inequality in the first line,
  we have applied the first Korn inequality in $H^1(\Omega;\Real^d)$ to the first term and invoked \eqref{eq:est.Iav.h1} for the second term after recalling the definition \eqref{eq:norm.dG} of the jump seminorm in the second line,
  we have inserted $\pm\GRADsh\vIav[l]\vec{v}_h$ and used a triangle inequality to pass to the third line,
  we have invoked again \eqref{eq:est.Iav.h1} to estimate the second term in the right-hand side to pass to the fourth line, and we have used the definition \eqref{eq:norm.dG} of the strain norm to conclude.
\end{proof}
\begin{remark}[Korn--Poincar\'e inequality]
  Combining the discrete Poincar\'e inequality resulting from \cite[Theorem 6.1]{Di-Pietro.Ern:10} (see also \cite[Theorem 5.3 and Corollary 5.4]{Di-Pietro.Ern:12}) with \eqref{eq:korn}, we infer that it holds, for all $\vec{v}_h\in\Poly{l}(\Th;\Real^d)$,
  \begin{equation}\label{eq:korn-poincare}
    \norm{\vec{v}_h}\lesssim\norm[\strain,h]{\vec{v}_h},
  \end{equation}
  with hidden constant independent of $h$ and $\vec{v}_h$.
\end{remark}

\section{Discretisation}\label{sec:discretisation}

\subsection{Discrete space}

Given a mesh $\Mh=(\Th,\Fh)$, we define the following space of discrete unknowns:
$$
\vUh\coloneq\left\{
\uvec{v}_h=( (\vec{v}_T)_{T\in\Th},(\vec{v}_F)_{F\in\Fh} )\st
\vec{v}_T\in\Poly{0}(T;\Real^d)\quad\forall T\in\Th\mbox{ and }
\vec{v}_F\in\Poly{0}(F;\Real^d)\quad\forall F\in\Fh
\right\}.
$$
For all $\uvec{v}_h\in\vUh$, we denote by $\vec{v}_h\in\Poly{0}(\Th;\Real^d)$ the piecewise constant function obtained by patching element-based unknowns, that is,
\begin{equation}\label{eq:vh}
  (\vec{v}_h)_{|T}\coloneq\vec{v}_T\qquad\forall T\in\Th.
\end{equation}
The restrictions of $\vUh$ and $\uvec{v}_h\in\vUh$ to a generic mesh element $T\in\Th$ are respectively denoted by $\vUT$ and $\uvec{v}_T=(\vec{v}_T,(\vec{v}_F)_{F\in\Fh[T]})$.
The vector of discrete variables corresponding to a smooth function on $\Omega$ is obtained via the global interpolation operator $\vIh:H^1(\Omega;\Real^d)\to\vUh$ such that, for all $\vec{v}\in H^1(\Omega;\Real^d)$,
$$
\vIh\vec{v}\coloneq( (\vlproj[T]{0}\vec{v}_{|T})_{T\in\Th}, (\vlproj[F]{0}\vec{v}_{|F})_{F\in\Fh} ).
$$
Its restriction to a generic mesh element $T\in\Th$ is the local interpolator $\vIT:H^1(T;\Real^d)\to\vUT$ such that, for all $\vec{v}\in H^1(T;\Real^d)$,
\begin{equation}\label{eq:vIT}
  \vIT\vec{v} = (\vlproj[T]{0}\vec{v}, (\vlproj[F]{0}\vec{v}_{|F})_{F\in\Fh[T]}).
\end{equation}
The displacement is sought in the following subspace of $\vUh$ that strongly incorporates the homogeneous Dirichlet boundary condition:
$$ 
\vUhD\coloneq\left\{
\uvec{v}_h\in\vUh\st\vec{v}_F=\vec{0}\quad\forall F\in\Fhb
\right\}.
$$

\subsection{Displacement reconstruction}

Let a mesh element $T\in\Th$ be fixed.
We define the local displacement reconstruction operator $\vpT:\vUT\to\Poly{1}(T;\Real^d)$ such that, for all $\uvec{v}_T\in\vUT$,
\begin{equation}\label{eq:vpT}
  \GRAD\vpT\uvec{v}_T = \sum_{F\in\Fh[T]}\frac{\meas{F}}{\meas{T}}(\vec{v}_F-\vec{v}_T)\otimes\normal_{TF}
  \mbox{ and }
  \frac{1}{\meas{T}}\int_T\vpT\uvec{v}_T=\vec{v}_T.
\end{equation}
\begin{remark}[Explicit expression for the displacement reconstruction operator]
  From \eqref{eq:vpT}, one can infer the following explicit expression for the displacement reconstruction operator: For all $\vec{x}\in T$,
  \begin{equation}\label{eq:vpT:bis}
    \vpT(\vec{x}) = \vec{v}_T
    + \sum_{F\in\Fh[T]}\frac{\meas{F}}{\meas{T}}(\vec{x}-\overline{\vec{x}}_T)\SCAL\normal_{TF}~(\vec{v}_F-\vec{v}_T),
  \end{equation}
  where $\overline{\vec{x}}_T\coloneq\frac{1}{\meas{T}}\int_T\vec{x}$ denotes the centroid of $T$.
\end{remark}
\begin{proposition}[Commutation properties for the displacement reconstruction]
  It holds, for all $\vec{v}\in H^1(T;\Real^d)$,
  \begin{equation}\label{eq:vpT:commutation}
    \GRAD(\vpT\vIT\vec{v})=\tlproj[T]{0}(\GRAD\vec{v})\mbox{ and }
    \vpT(\vIT\vec{v}) = \veproj[T]{1}\vec{v}.  
  \end{equation}
\end{proposition}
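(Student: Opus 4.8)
The plan is to verify the two identities separately, exploiting the two defining relations \eqref{eq:vpT} of the reconstruction together with the two defining relations \eqref{eq:eproj} of the elliptic projector. Throughout I would use that, for $\uvec{v}_T=\vIT\vec{v}$, the interpolator \eqref{eq:vIT} prescribes $\vec{v}_T=\vlproj[T]{0}\vec{v}$ and $\vec{v}_F=\vlproj[F]{0}\vec{v}_{|F}$ for all $F\in\Fh[T]$.

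For the gradient commutation, I would substitute these values into the first relation of \eqref{eq:vpT} to get
$$
\GRAD(\vpT\vIT\vec{v}) = \sum_{F\in\Fh[T]}\frac{\meas{F}}{\meas{T}}\big(\vlproj[F]{0}\vec{v}_{|F} - \vlproj[T]{0}\vec{v}\big)\otimes\normal_{TF},
$$
and then recognise the right-hand side as $\tlproj[T]{0}(\GRAD\vec{v})$. Since $\tlproj[T]{0}$ returns the cell average, an integration by parts gives $\int_T\GRAD\vec{v} = \sum_{F\in\Fh[T]}\int_F\vec{v}\otimes\normal_{TF} = \sum_{F\in\Fh[T]}\meas{F}\,(\vlproj[F]{0}\vec{v}_{|F})\otimes\normal_{TF}$, where the last equality uses that $\normal_{TF}$ is constant on $F$ and the definition of $\vlproj[F]{0}$ as the face mean. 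Dividing by $\meas{T}$ reproduces the first summand above, while the second summand vanishes after factoring out the constant $\vlproj[T]{0}\vec{v}$, thanks to the geometric identity $\sum_{F\in\Fh[T]}\meas{F}\normal_{TF} = \int_{\partial T}\normal = \vec{0}$.

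For the reconstruction commutation, both $\vpT(\vIT\vec{v})$ and $\veproj[T]{1}\vec{v}$ lie in $\Poly{1}(T;\Real^d)$, so it suffices to match gradients and means. The gradients coincide: the identity just proved gives $\GRAD(\vpT\vIT\vec{v}) = \tlproj[T]{0}(\GRAD\vec{v})$, while the first relation of \eqref{eq:eproj}, applied componentwise, gives $\GRAD\veproj[T]{1}\vec{v} = \tlproj[T]{0}(\GRAD\vec{v})$; hence the two polynomials differ by a constant vector. The means coincide too: the second relation of \eqref{eq:vpT} yields $\frac{1}{\meas{T}}\int_T\vpT\vIT\vec{v} = \vec{v}_T = \vlproj[T]{0}\vec{v}$, whereas the second relation of \eqref{eq:eproj} yields $\frac{1}{\meas{T}}\int_T\veproj[T]{1}\vec{v} = \vlproj[T]{0}\vec{v}$. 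Matching gradient and mean forces equality.

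The computations are elementary; the only genuinely structural point is the pair of Green's-formula identities used in the first step, namely that the cell average of $\GRAD\vec{v}$ is captured exactly by the face averages of $\vec{v}$, and that the measure-weighted outward normals sum to zero. Both are instances of integration by parts (applied to $\vec{v}$ and to a constant field, respectively), and they are precisely what render the definition \eqref{eq:vpT} consistent with the $L^2$-projection of the gradient. I therefore expect no real obstacle beyond correctly performing this identification.
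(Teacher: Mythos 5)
Your proof is correct and follows essentially the same route as the paper: substitute the interpolator values into the definition of $\vpT$, use the Stokes formula to identify the result with the cell average of $\GRAD\vec{v}$, and then match gradients and means to conclude the second identity. The only cosmetic difference is that you dispose of the constant term via the geometric identity $\sum_{F\in\Fh[T]}\meas{F}\normal_{TF}=\vec{0}$, whereas the paper applies Stokes to that term as well and notes that the gradient of a constant vanishes --- these are the same computation.
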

\begin{proof}
  Let $\vec{v}\in H^1(T;\Real^d)$.
  Recalling the definition \eqref{eq:vIT} of the local interpolator, we have that
  $$
  \begin{aligned}
    \GRAD\vpT\vIT\vec{v}
    &= \sum_{F\in\Fh[T]}\frac{\meas{F}}{\meas{T}}(\vlproj[F]{0}\vec{v}-\vlproj[T]{0}\vec{v})\otimes\normal_{TF}
    \\
    &= \frac{1}{\meas{T}}\sum_{F\in\Fh[T]}\int_F\vec{v}\otimes\normal_{TF}
    - \frac{1}{\meas{T}}\sum_{F\in\Fh[T]}\int_F\vlproj[T]{0}\vec{v}\otimes\normal_{TF}
    \\
    &= \frac{1}{\meas{T}}\int_T\GRAD\vec{v}
    - \cancel{\frac{1}{\meas{T}}\int_T\GRAD\vlproj[T]{0}\vec{v}},
  \end{aligned}
  $$
  where we have used the definition \eqref{eq:vpT} of the local displacement reconstruction with $\uvec{v}_T=\vIT\vec{v}$ in the first line,
  the definition \eqref{eq:lproj} of the $L^2$-orthogonal projector $\vlproj[F]{0}$ along with the fact that $\vlproj[T]{0}\vec{v}\otimes\normal_{TF}$ is constant over $F$ to pass to the second line,
  the Stokes theorem to pass to the third line and the fact that $\vlproj[T]{0}\vec{v}$ is constant inside $T$ to cancel the second term therein.
  This proves the first relation in \eqref{eq:vpT:commutation}.
  The second relation in \eqref{eq:vpT:commutation} immediately follows accounting for the first and recalling the definition \eqref{eq:eproj} of the elliptic projector after observing that the second relation in \eqref{eq:vpT} gives
  \[
  \pushQED{\qed}
  \frac{1}{\meas{T}}\int_T\vpT\vIT\vec{v}
  = \vlproj[T]{0}\vec{v}
  = \frac{1}{\meas{T}}\int_T\vec{v}.\qedhere
  \]
\end{proof}
To close this section, we define the global displacement reconstruction operator $\vph:\vUh\to\Poly{1}(\Th;\Real^d)$ obtained by patching the local reconstructions:
For all $\uvec{v}_h\in\vUh$,
\begin{equation}\label{eq:vph}
  (\vph\uvec{v}_h)_{|T}\coloneq\vpT\uvec{v}_T\qquad\forall T\in\Th.
\end{equation}

\subsection{Discrete bilinear form}

We define the bilinear form $\mathrm{a}_h:\vUh\times\vUh\to\Real$ such that, for all $\uvec{w}_h,\uvec{v}_h\in\vUh$,
\begin{equation}\label{eq:ah}
  \mathrm{a}_h(\uvec{w}_h,\uvec{v}_h)
  \coloneq
  (\tens{\sigma}(\GRADsh\vph\uvec{w}_h),\GRADsh\vph\uvec{v}_h)
  + (2\mu)~\mathrm{j}_h(\vph[1]\uvec{w}_h,\vph[1]\uvec{v}_h)  
  + (2\mu)~\mathrm{s}_h(\uvec{w}_h,\uvec{v}_h).
\end{equation}
In the above expression, $\mathrm{j}_h: H^1(\Th;\Real^d)\times H^1(\Th;\Real^d)\to\Real$ is the jump penalisation bilinear form such that, for all $\vec{w},\vec{v}\in H^1(\Th;\Real^d)$,
$$
\mathrm{j}_h(\vec{w},\vec{v})
\coloneq\sum_{F\in\Fh}h_F^{-1}(\jump{\vec{w}},\jump{\vec{v}})_F,
$$
while $\mathrm{s}_h:\vUh\times\vUh\to\Real$ is a stabilisation bilinear form defined from local contributions as follows:
\begin{equation}\label{eq:sh}
  \mathrm{s}_h(\uvec{w}_h,\uvec{v}_h)\coloneq\sum_{T\in\Th}\mathrm{s}_T(\uvec{w}_T,\uvec{v}_T)\mbox{ with }
  \mathrm{s}_T(\uvec{w}_T,\uvec{v}_T)
  \coloneq \sum_{F\in\Fh[T]}\frac{\meas{F}}{h_F}\vec{\delta}_{TF}\uvec{w}_T\SCAL\vec{\delta}_{TF}\uvec{v}_T
  \mbox{ for all $T\in\Th$}.
\end{equation}
In the above expression, for all $T\in\Th$ and all $F\in\Fh[T]$, we have introduced the boundary difference operator $\vec{\delta}_{TF}:\vUT\to\Real^d$ is such that, for any $\uvec{v}_T\in\vUT$, 
\begin{equation}\label{eq:dTF}
  \vec{\delta}_{TF}\uvec{v}_T\coloneq\vlproj[F]{0}\vpT\uvec{v}_T-\vec{v}_F.
\end{equation}
It can be proved that the stabilisation bilinear form enjoys the following consistency property:
For all $\vec{w}\in H^1(\Omega;\Real^d)\cap H^2(\Th;\Real^d)$,
\begin{equation}\label{eq:sh:consistency}
  \mathrm{s}_h(\vIh\vec{w},\vIh\vec{w})^{\frac12}\lesssim h\seminorm[H^2(\Th;\Real^d)]{\vec{w}},
\end{equation}
with hidden constant independent of both $h$ and $\vec{w}$.

\subsection{Comparison with the original HHO method and role of the jump penalisation term}\label{sec:comparison.hho}

Compared with the original HHO bilinear form defined by \cite[Eqs. (24)--(26) and (38)]{Di-Pietro.Ern:15} and written for $k=0$, the bilinear form \eqref{eq:ah} includes a novel jump penalisation contribution inspired by the discrete Korn inequality of Lemma \ref{lem:korn}.
This term is needed for stability which, for HHO discretisations of the linear elasticity problem, cannot be achieved through local stabilisation terms for $k=0$.
As a matter of fact, following the ideas of \cite[Section 4.3.1.4]{Di-Pietro.Tittarelli:18}, stability would require the use in \eqref{eq:sh} of a family of local symmetric, positive semidefinite stabilisation bilinear forms $\left\{\mathrm{s}_T\st T\in\Th\right\}$ satisfying the following properties:
\begin{compactenum}[(i)]
\item \emph{Local stability and boundedness.} For all $T\in\Th$ and all $\uvec{v}_T\in\vUT$, with hidden constants independent of $h$, $T$, and $\uvec{v}_T$,
  \begin{equation}\label{eq:sT:norm.equivalence}
    \norm[T]{\GRADs\vpT\uvec{v}_T}^2 + \mathrm{s}_T(\uvec{v}_T,\uvec{v}_T)
    \simeq
    \sum_{F\in\Fh[T]}h_F^{-1}\norm[F]{\vec{v}_F-\vec{v}_T}^2.
  \end{equation}  
\item \emph{Polynomial consistency.} For all $\vec{w}\in\Poly{k+1}(T;\Real^d)$,
  \begin{equation}\label{eq:sT:polynomial.consistency}
    \mathrm{s}_T(\vIT\vec{w},\uvec{v}_T)=0\qquad\forall\uvec{v}_T\in\vUT.
  \end{equation}
\end{compactenum}
Actually, as noticed in \cite[Chapter 7]{Di-Pietro.Droniou:19}, properties \eqref{eq:sT:norm.equivalence} and \eqref{eq:sT:polynomial.consistency} are incompatible.
To see it, assume \eqref{eq:sT:polynomial.consistency}, consider a rigid-body motion $\vec{v}_{{\rm rbm}}$, that is, a function over $\overline{T}$ for which there exist a vector $\vec{t}_{\vec{v}}\in\Real^d$ and a skew-symmetric matrix $\tens{R}_{\vec{v}}\in\Real^{d\times d}$ such that, for any $\vec{x}\in\overline{T}$, $\vec{v}_{{\rm rbm}}(\vec{x}) = \vec{t}_{\vec{v}} + \tens{R}_{\vec{v}}\vec{x}$.
Take now $\uvec{v}_T=\vIT\vec{v}_{{\rm rbm}}$.
Since $\vec{v}_{{\rm rbm}}\in\Poly{1}(T;\Real^d)$, the first relation in \eqref{eq:vpT:commutation} shows that $\GRAD\vpT\uvec{v}_T=\vlproj[T]{0}(\GRAD\vec{v}_{{\rm rbm}})=\GRAD\vec{v}_{{\rm rbm}}=\tens{R}_{\vec{v}}$ so that, in particular, $\GRAD\vpT\uvec{v}_T$ is skew-symmetric. Hence, $\GRADs\vpT\uvec{v}_T=\tens{0}$.
Moreover, by \eqref{eq:sT:polynomial.consistency}, $\mathrm{s}_T(\uvec{v}_T,\uvec{v}_T)=\mathrm{s}_T(\vIT[0]\vec{v}_{{\rm rbm}},\uvec{v}_T)=0$, again because $\vec{v}_{\rm rbm}\in\Poly{1}(T;\Real^d)$.
Hence, the left-hand side of \eqref{eq:sT:norm.equivalence} vanishes for all $\uvec{v}_T=\vIT\vec{v}_{{\rm rbm}}$ with $\vec{v}_{{\rm rbm}}$ rigid-body motion.
It is, however, easy to construct a rigid-body motion $\vec{v}_{{\rm rbm}}$ such that the right-hand side does not vanish, which shows that \eqref{eq:sT:norm.equivalence} cannot hold.
For this reason, the assumption that the discrete unknowns are at least piecewise affine is required in the original HHO method; see \cite[Section 4]{Di-Pietro.Ern:15}.
Notice that the choice of $\mathrm{s}_T$ in \eqref{eq:sh} retains the polynomial consistency property \eqref{eq:sT:polynomial.consistency}, which is crucial to prove \eqref{eq:sh:consistency}.

We next discuss how the stability property modifies for $k=0$.
To this end, recalling the definitions \eqref{eq:norm.dG} of the double-bar strain norm $\norm[\strain,h]{{\cdot}}$ and \eqref{eq:sh} of the stabilisation bilinear form we introduce the triple-bar strain norm such that, for any $\uvec{v}_h\in\vUh$,
\begin{equation}\label{eq:tnorm.strain.h}
  \tnorm[\strain,h]{\uvec{v}_h}
  \coloneq\left(
  \norm[\strain,h]{\vph\uvec{v}_h}^2 + \seminorm[\mathrm{s},h]{\uvec{v}_h}^2
  \right)^{\frac12}\mbox{ with }
  \seminorm[\mathrm{s},h]{\uvec{v}_h}\coloneq\mathrm{s}_h(\uvec{v}_h,\uvec{v}_h)^{\frac12}.
\end{equation}
\begin{lemma}[Global stability and boundedness]
  For all $\uvec{v}_h\in\vUhD$ it holds
  \begin{equation}\label{eq:global.norm.equivalence}
    \norm{\GRADsh\vph\uvec{v}_h}^2 + \seminorm[\mathrm{s},h]{\uvec{v}_h}^2
    \lesssim
    \sum_{T\in\Th}\sum_{F\in\Fh[T]}h_F^{-1}\norm[F]{\vec{v}_F-\vec{v}_T}^2
    \lesssim
    \tnorm[\strain,h]{\uvec{v}_h}^2,
  \end{equation}
  with hidden constant independent of both $h$ and $\uvec{v}_h$.
\end{lemma}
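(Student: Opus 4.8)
The plan is to prove the two inequalities separately. The leftmost one is a local boundedness statement that follows elementwise from the explicit structure of the reconstruction operator, while the rightmost one is the genuinely stabilising estimate and is the place where the discrete Korn inequality of Lemma~\ref{lem:korn} enters decisively. Throughout, the only analytical tools needed are the explicit form of $\vpT$, the contraction property of the $L^2$-projector, and the mesh-regularity equivalences $h_F\simeq h_T$ and $\meas{F}/\meas{T}\simeq h_F^{-1}$.

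For the first inequality I would fix $T\in\Th$ and bound both $\norm[T]{\GRADs\vpT\uvec{v}_T}^2$ and $\mathrm{s}_T(\uvec{v}_T,\uvec{v}_T)$ by $\sum_{F\in\Fh[T]}h_F^{-1}\norm[F]{\vec{v}_F-\vec{v}_T}^2$, then sum over $T$. Since the first relation in \eqref{eq:vpT} shows that $\GRAD\vpT\uvec{v}_T$ is a constant matrix, a Cauchy--Schwarz inequality over the (at most $N_\partial$) faces of $T$, together with $|\normal_{TF}|=1$, the identity $\norm[F]{\vec{v}_F-\vec{v}_T}^2=\meas{F}\,|\vec{v}_F-\vec{v}_T|^2$, and $\meas{F}/\meas{T}\simeq h_F^{-1}$, yields the bound on $\norm[T]{\GRAD\vpT\uvec{v}_T}^2$, hence on its symmetric part. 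For the stabilisation I would write $\vec{\delta}_{TF}\uvec{v}_T=\vlproj[F]{0}(\vpT\uvec{v}_T-\vec{v}_T)-(\vec{v}_F-\vec{v}_T)$, use that $\vpT\uvec{v}_T-\vec{v}_T=\GRAD\vpT\uvec{v}_T\,(\vec{x}-\overline{\vec{x}}_T)$ is affine so that $\norm[F]{\vpT\uvec{v}_T-\vec{v}_T}^2\lesssim h_F\norm[T]{\GRAD\vpT\uvec{v}_T}^2$, and combine this with the contraction of $\vlproj[F]{0}$ and the gradient bound just obtained.

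For the second inequality the starting point is the splitting $\vec{v}_F-\vec{v}_T=-\vec{\delta}_{TF}\uvec{v}_T+(\vlproj[F]{0}\vpT\uvec{v}_T-\vec{v}_T)$, which is legitimate because the second relation in \eqref{eq:vpT} gives $\vlproj[T]{0}\vpT\uvec{v}_T=\vec{v}_T$. After multiplication by $h_F^{-1}$, the first term is exactly controlled by $\mathrm{s}_T(\uvec{v}_T,\uvec{v}_T)$. The second term is the difference between the face and cell averages of the affine field $\vpT\uvec{v}_T$; exploiting again $\vpT\uvec{v}_T-\vec{v}_T=\GRAD\vpT\uvec{v}_T\,(\vec{x}-\overline{\vec{x}}_T)$ and mesh regularity, I would bound it by $h_F\norm[T]{\GRAD\vpT\uvec{v}_T}^2$. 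Summing over faces and elements then produces $\sum_{T\in\Th}\sum_{F\in\Fh[T]}h_F^{-1}\norm[F]{\vec{v}_F-\vec{v}_T}^2\lesssim\seminorm[H^1(\Th;\Real^d)]{\vph\uvec{v}_h}^2+\seminorm[\mathrm{s},h]{\uvec{v}_h}^2$, that is, the face differences are controlled by the \emph{full} broken gradient of the reconstruction plus the stabilisation.

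The main obstacle, and the crux of the whole argument, is that this last bound features the full broken gradient $\seminorm[H^1(\Th;\Real^d)]{\vph\uvec{v}_h}$, whereas the triple-bar norm contains only its symmetric part. To close the estimate I would invoke the discrete Korn inequality of Lemma~\ref{lem:korn} applied to $\vph\uvec{v}_h\in\Poly{1}(\Th;\Real^d)$ (admissible since $l=1\ge1$), which gives $\seminorm[H^1(\Th;\Real^d)]{\vph\uvec{v}_h}^2\lesssim\norm{\GRADsh\vph\uvec{v}_h}^2+\seminorm[\mathrm{j},h]{\vph\uvec{v}_h}^2$. Substituting and recalling the definition \eqref{eq:tnorm.strain.h} of $\tnorm[\strain,h]{\cdot}$ concludes the proof. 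This is precisely where the jump penalisation seminorm $\seminorm[\mathrm{j},h]{\vph\uvec{v}_h}$, absent from the original HHO method, becomes indispensable: without it the symmetric gradient alone cannot dominate the full gradient of a broken polynomial, which is exactly the loss of coercivity for $k=0$ discussed in Section~\ref{sec:comparison.hho}.
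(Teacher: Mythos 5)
Your proof is correct and takes essentially the same route as the paper's: the only difference is that the paper cites \cite[Lemma~4]{Di-Pietro.Ern.ea:14} for the equivalence between the full broken gradient plus stabilisation and the face-difference seminorm, which you instead prove by hand from the explicit form of $\vpT$. The decisive step --- applying the discrete Korn inequality of Lemma~\ref{lem:korn} to $\vph\uvec{v}_h$ to trade the full broken gradient for its symmetric part plus the jump seminorm --- is identical in both arguments.
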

\begin{proof}
  It follows from \cite[Lemma 4]{Di-Pietro.Ern.ea:14} that
  \begin{equation}\label{eq:global.norm.equivalence:1}
    \norm{\GRADh\vph\uvec{v}_h}^2 + \seminorm[\mathrm{s},h]{\uvec{v}_h}^2
    \simeq
    \sum_{T\in\Th}\sum_{F\in\Fh[T]}h_F^{-1}\norm[F]{\vec{v}_F-\vec{v}_T}^2,
  \end{equation}
  where $\GRADh: H^1(\Th;\Real^d)\to L^2(\Omega;\Real^{d\times d})$ is the broken gradient such that $(\GRADh\vec{v})_{|T}=\GRAD\vec{v}_{|T}$ for any $T\in\Th$.
  On the other hand, using the definition of the symmetric gradient for the first bound and Korn's inequality \eqref{eq:korn} for the second, we can write
  \begin{equation}\label{eq:global.norm.equivalence:2}
    \norm{\GRADsh\vph\uvec{v}_h}^2
    \lesssim\norm{\GRADh\vph\uvec{v}_h}^2
    \lesssim\norm{\GRADsh\vph\uvec{v}_h}^2 + \seminorm[\mathrm{j},h]{\vph\vec{v}_h}^2.
  \end{equation}
  Combining \eqref{eq:global.norm.equivalence:2} with \eqref{eq:global.norm.equivalence:1} yields the result.
\end{proof}

\subsection{Discrete problem}

The low-order scheme for the approximation of problem \eqref{eq:strong} reads:
Find $\uvec{u}_h\in\vUhD$ such that
\begin{equation}\label{eq:discrete}
  \mathrm{a}_h(\uvec{u}_h,\uvec{v}_h)
  = (\vec{f},\vec{v}_h)\qquad\forall\uvec{v}_h\in\vUhD.
\end{equation}
Using the coercivity of the bilinear form $\mathrm{a}_h$ proved in Lemma \ref{lem:ah} below together with the discrete Korn inequality \eqref{eq:korn-poincare}, we infer that the discrete problem is well-posed and the a priori estimate $\tnorm[\strain,h]{\uvec{u}_h}\lesssim \alpha^{-\frac12}\norm{\vec{f}}$ holds for the discrete solution, with hidden constant independent of both $h$ and of the problem data, and triple-bar strain seminorm defined by \eqref{eq:tnorm.strain.h} below.
\begin{remark}[Static condensation for problem \eqref{eq:discrete}]\label{rem:static.condensation}
  The jump stabilisation introduces a direct link among discrete unknowns attached to neighbouring mesh elements.
  As a result, static condensation of element-based unknowns no longer appears to be an interesting option.
\end{remark}


\section{Convergence analysis}\label{sec:convergence}

In this section, after studying the properties of the discrete bilinear form $\mathrm{a}_h$, we prove a priori estimates for the error in the energy- and $L^2$-norms.

\subsection{Properties of the discrete bilinear form}

\begin{lemma}[Properties of $\mathrm{a}_h$]\label{lem:ah}
  The bilinear form $\mathrm{a}_h$ enjoys the following properties:
  \begin{compactenum}[(i)]
  \item \emph{Stability and boundedness.} Recalling the definition \eqref{eq:tnorm.strain.h} of the triple-bar strain norm and the bound \eqref{eq:lambda.mu.bounds} on Lam\'e's coefficients, for all $\uvec{v}_h\in\vUh$ it holds 
    \begin{equation}\label{eq:ah:stability}
      \alpha\tnorm[\strain,h]{\uvec{v}_h}^2
      \lesssim\norm[\mathrm{a},h]{\uvec{v}_h}^2
      \lesssim\left(2\mu+d|\lambda|\right)\tnorm[\strain,h]{\uvec{v}_h}^2
      \mbox{ with }
      \norm[\mathrm{a},h]{\uvec{v}_h}\coloneq\mathrm{a}_h(\uvec{v}_h,\uvec{v}_h)^{\frac12},
    \end{equation}
    where the hidden constants are independent of both $h$ and the problem data.
  \item \emph{Consistency.} It holds for all $\vec{w}\in H_0^1(\Omega;\Real^d)\cap H^2(\Th;\Real^d)$ such that $\DIV\tens{\sigma}(\GRADs\vec{w})\in L^2(\Omega;\Real^d)$,
    \begin{equation}\label{eq:ah:consistency}
      \tnorm[\strain,h,*]{\Cerr{\vec{w}}{\cdot}}
      \lesssim h\left(
      2\mu\seminorm[H^2(\Th;\Real^d)]{\vec{w}}
      + \seminorm[H^1(\Th;\Real)]{\lambda\DIV\vec{w}}
      \right),
    \end{equation}
    where the hidden constant is independent of $\vec{w}$, $h$, and of the problem data, the linear form 
    $\Cerr{\vec{w}}{{\cdot}}:\vUhD\to\Real$ representing the consistency error is such that, for all 
    $\uvec{v}_h\in\vUhD$,
    \begin{equation}\label{eq:Eh}
      \Cerr{\vec{w}}{\uvec{v}_h}
      \coloneq
      -(\DIV\tens{\sigma}(\GRADs\vec{w}),\vec{v}_h)
      - \mathrm{a}_h(\vIh\vec{w},\uvec{v}_h),
    \end{equation}
    and its dual norm is given by
    $$
    \tnorm[\strain,h,*]{\Cerr{\vec{w}}{\cdot}}\coloneq
    \sup_{\uvec{v}_h\in\vUhD,\tnorm[\strain,h]{\uvec{v}_h}=1}\left|\Cerr{\vec{w}}{\uvec{v}_h}\right|.
    $$
  \end{compactenum}
\end{lemma}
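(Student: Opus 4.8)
The two properties are proved separately: \eqref{eq:ah:stability} is purely algebraic, whereas \eqref{eq:ah:consistency} carries the real content.

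\emph{Stability and boundedness.} For \eqref{eq:ah:stability} I would rely only on the strain--stress law \eqref{eq:sigma}. Since $\tr(\tens{\tau})=\Id\SSCAL\tens{\tau}$ and $|\Id|^2=d$, a pointwise computation gives $\tens{\sigma}(\tens{\tau})\SSCAL\tens{\tau}=2\mu|\tens{\tau}|^2+\lambda(\tr\tens{\tau})^2$ for every $\tens{\tau}\in\Real_{\rm sym}^{d\times d}$, with $0\le(\tr\tens{\tau})^2\le d|\tens{\tau}|^2$ by Cauchy--Schwarz. Discussing the sign of $\lambda$ and recalling $\lambda^-=\frac12(|\lambda|-\lambda)$, this yields $(2\mu-d\lambda^-)|\tens{\tau}|^2\le\tens{\sigma}(\tens{\tau})\SSCAL\tens{\tau}\le(2\mu+d|\lambda|)|\tens{\tau}|^2$. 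Taking $\tens{\tau}=\GRADsh\vph\uvec{v}_h$ and integrating, and noting that by \eqref{eq:ah} the remaining two contributions to $\norm[\mathrm{a},h]{\uvec{v}_h}^2$ are exactly $2\mu\,\seminorm[\mathrm{j},h]{\vph\uvec{v}_h}^2$ and $2\mu\,\seminorm[\mathrm{s},h]{\uvec{v}_h}^2$, the two-sided bound follows term-by-term from the definition \eqref{eq:tnorm.strain.h} of $\tnorm[\strain,h]{\cdot}$ once I use $\alpha\le 2\mu-d\lambda^-$ from \eqref{eq:lambda.mu.bounds} together with $\alpha\le 2\mu\le 2\mu+d|\lambda|$, the middle inequality being a consequence of $d\lambda^-\ge0$. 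The Korn inequality plays no role here.

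\emph{Consistency.} For \eqref{eq:ah:consistency}, write $\tens{\sigma}\coloneq\tens{\sigma}(\GRADs\vec{w})$ and split $\mathrm{a}_h(\vIh\vec{w},\uvec{v}_h)=\term_1+\term_2+\term_3$ according to the three summands of \eqref{eq:ah}, namely the stress term $\term_1\coloneq(\tens{\sigma}(\GRADsh\vph\vIh\vec{w}),\GRADsh\vph\uvec{v}_h)$, the jump term $\term_2\coloneq 2\mu\,\mathrm{j}_h(\vph\vIh\vec{w},\vph\uvec{v}_h)$, and the stabilisation term $\term_3\coloneq 2\mu\,\mathrm{s}_h(\vIh\vec{w},\uvec{v}_h)$. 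Taking the symmetric part of the first commutation relation in \eqref{eq:vpT:commutation} gives $\GRADsh\vph\vIh\vec{w}=\tlproj{0}(\GRADs\vec{w})$; since $\tens{\sigma}$ is linear and commutes with $\tlproj{0}$ (the trace commuting with the projector) and $\GRADsh\vph\uvec{v}_h$ is piecewise constant, this reduces $\term_1$ to $(\tens{\sigma},\GRADsh\vph\uvec{v}_h)$, which by the explicit gradient in \eqref{eq:vpT} equals $\sum_{T\in\Th}\sum_{F\in\Fh[T]}\meas{F}\,(\tlproj[T]{0}\tens{\sigma}\,\normal_{TF})\SCAL(\vec{v}_F-\vec{v}_T)$. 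The crucial step is to match this against $-(\DIV\tens{\sigma},\vec{v}_h)$. Integrating by parts element by element, the volume contribution vanishes because each $\vec{v}_T$ is constant, leaving $-\sum_{T\in\Th}\sum_{F\in\Fh[T]}\int_F(\tens{\sigma}\normal_{TF})\SCAL\vec{v}_T$. The step I expect to be the main obstacle is recognising that the companion sum $\sum_{T\in\Th}\sum_{F\in\Fh[T]}\int_F(\tens{\sigma}\normal_{TF})\SCAL\vec{v}_F$ vanishes identically: on interior faces the single value of $\vec{v}_F$ pairs with the continuity of the normal traction of $\tens{\sigma}$ — precisely the content of the hypothesis $\DIV\tens{\sigma}\in L^2(\Omega;\Real^d)$ for a piecewise-$H^1$ stress — while on boundary faces $\vec{v}_F=\vec{0}$ since $\uvec{v}_h\in\vUhD$. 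Adding this zero and using $\tlproj[F]{0}\tlproj[T]{0}\tens{\sigma}=\tlproj[T]{0}\tens{\sigma}$ recasts the combination as
\[
-(\DIV\tens{\sigma},\vec{v}_h)-\term_1
=\sum_{T\in\Th}\sum_{F\in\Fh[T]}\meas{F}\,
\bigl(\tlproj[F]{0}(\tens{\sigma}-\tlproj[T]{0}\tens{\sigma})\,\normal_{TF}\bigr)\SCAL(\vec{v}_F-\vec{v}_T).
\]
A face Cauchy--Schwarz, the $L^2$-boundedness of $\tlproj[F]{0}$, the trace estimate \eqref{eq:lproj:approx} applied component-wise to $\tens{\sigma}\in H^1(\Th;\Real_{\rm sym}^{d\times d})$ (giving $h_F\norm[F]{\tens{\sigma}-\tlproj[T]{0}\tens{\sigma}}^2\lesssim h_T^2\seminorm[H^1(T;\Real^{d\times d})]{\tens{\sigma}}^2$), and the control $\sum_{T\in\Th}\sum_{F\in\Fh[T]}h_F^{-1}\norm[F]{\vec{v}_F-\vec{v}_T}^2\lesssim\tnorm[\strain,h]{\uvec{v}_h}^2$ from \eqref{eq:global.norm.equivalence} then bound this term by $h\,\seminorm[H^1(\Th;\Real^{d\times d})]{\tens{\sigma}}\,\tnorm[\strain,h]{\uvec{v}_h}$. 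Decomposing $\tens{\sigma}=2\mu\GRADs\vec{w}+\lambda(\DIV\vec{w})\Id$ turns $\seminorm[H^1(\Th;\Real^{d\times d})]{\tens{\sigma}}$ into $2\mu\seminorm[H^2(\Th;\Real^d)]{\vec{w}}+\seminorm[H^1(\Th;\Real)]{\lambda\DIV\vec{w}}$ up to a multiplicative constant, matching the target right-hand side.

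It remains to absorb $\term_2$ and $\term_3$. For $\term_3$, a Cauchy--Schwarz on the positive semidefinite form $\mathrm{s}_h$, the consistency bound \eqref{eq:sh:consistency}, and $\seminorm[\mathrm{s},h]{\uvec{v}_h}\le\tnorm[\strain,h]{\uvec{v}_h}$ give $|\term_3|\lesssim 2\mu\,h\,\seminorm[H^2(\Th;\Real^d)]{\vec{w}}\,\tnorm[\strain,h]{\uvec{v}_h}$. For $\term_2$, a Cauchy--Schwarz on $\mathrm{j}_h$ together with $\seminorm[\mathrm{j},h]{\vph\uvec{v}_h}\le\tnorm[\strain,h]{\uvec{v}_h}$ reduces matters to estimating $\seminorm[\mathrm{j},h]{\vph\vIh\vec{w}}$; here the second relation in \eqref{eq:vpT:commutation} gives $\vph\vIh\vec{w}=\veproj{1}\vec{w}$ element-wise, and since $\vec{w}\in H_0^1(\Omega;\Real^d)$ has vanishing jumps across every face, $\jump{\vph\vIh\vec{w}}=\jump{\veproj{1}\vec{w}-\vec{w}}$, which the elliptic-projector trace estimate \eqref{eq:eproj:approx} bounds face-by-face to yield $\seminorm[\mathrm{j},h]{\vph\vIh\vec{w}}\lesssim h\,\seminorm[H^2(\Th;\Real^d)]{\vec{w}}$ and hence $|\term_2|\lesssim 2\mu\,h\,\seminorm[H^2(\Th;\Real^d)]{\vec{w}}\,\tnorm[\strain,h]{\uvec{v}_h}$. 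Summing the three bounds, dividing by $\tnorm[\strain,h]{\uvec{v}_h}$ and taking the supremum over $\uvec{v}_h\in\vUhD$ then gives \eqref{eq:ah:consistency}. The genuine difficulty, as noted, is the vanishing companion face sum, which is what converts the raw traction mismatch into the high-order projection-difference form that can be estimated optimally and, through the separate appearance of $2\mu$ and $\lambda\DIV\vec{w}$, robustly in the incompressible limit.
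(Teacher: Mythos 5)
Your proposal is correct and follows essentially the same route as the paper: for (i) the pointwise two-sided bound on $\tens{\tau}\mapsto\tens{\sigma}(\tens{\tau})\SSCAL\tens{\tau}$ combined with $\alpha\le 2\mu$, and for (ii) the element-by-element integration by parts (with the companion face sum vanishing by traction continuity and zero boundary unknowns), the commutation property \eqref{eq:vpT:commutation} to reduce the consistency term to a projection difference estimated via \eqref{eq:lproj:approx} and \eqref{eq:global.norm.equivalence}, and the bounds on the jump and stabilisation terms via \eqref{eq:eproj:approx} and \eqref{eq:sh:consistency}. The only cosmetic difference is your insertion of the face projector $\tlproj[F]{0}$, which the paper's proof does not need since it keeps the face integrals explicit.
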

\begin{proof}
  (i) \emph{Stability and boundedness.}
  Let $\uvec{v}_h\in\vUh$. 
  We recall the Frobenius product such that, for all $\tens{\tau},\tens{\eta}\in\Real^{d\times d}$, $\tens{\tau}\SSCAL\tens{\eta}\coloneq\sum_{i=1}^d\sum_{j=1}^d\tau_{ij}\eta_{ij}$ with corresponding norm $\norm[\rm F]{\tens{\tau}}\coloneq(\tens{\tau}\SSCAL\tens{\tau})^{\frac12}$.
  Writing \eqref{eq:ah} for $\uvec{w}_h=\uvec{v}_h$, using the assumption \eqref{eq:lambda.mu.bounds} on Lam\'e's parameters to infer that $\tens{\sigma}(\tens{\tau})\SSCAL\tens{\tau}\ge\alpha\norm[\rm F]{\tens{\tau}}^2$ for any $\tens{\tau}\in\Real_{\rm sym}^{d\times d}$, recalling the definitions \eqref{eq:norm.dG} and \eqref{eq:tnorm.strain.h} of the double- and triple-bar strain norms, and observing that $2\mu\ge\alpha$, the first inequality in \eqref{eq:ah:stability} follows.
  The second inequality can be obtained in a similar way: write \eqref{eq:ah} for $\uvec{w}_h=\uvec{v}_h$, observe that $|\tens{\sigma}(\tens{\tau})\SSCAL\tens{\tau}|\le(2\mu + d|\lambda|)\norm[\rm F]{\tens{\tau}}^2$ for any $\tens{\tau}\in\Real_{\rm sym}^{d\times d}$, and use again \eqref{eq:norm.dG} and \eqref{eq:tnorm.strain.h}.
  \medskip\\
  (ii) \emph{Consistency.}
  Let $\uvec{v}_h\in\vUhD$.
  We reformulate the components of the consistency error.
  Integrating by parts element by element, we infer that
  $$
  -(\DIV\tens{\sigma}(\GRADs\vec{w}),\vec{v}_h)
  = \sum_{T\in\Th}\sum_{F\in\Fh[T]}(\tens{\sigma}(\GRADs\vec{w})_{|T}\normal_{TF},\vec{v}_F-\vec{v}_T)_F,
  $$
  where we have used the continuity of normal tractions across interfaces together with the fact that boundary unknowns are set to zero in $\vUhD$  to insert $\vec{v}_F$ into the right-hand side.
  To reformulate the second term in \eqref{eq:Eh}, in the expression \eqref{eq:ah} of $\mathrm{a}_h$ we use the first property in \eqref{eq:vpT:commutation} together with the linearity of the strain-stress law $\tens{\sigma}$ to write, for all $T\in\Th$, $\tens{\sigma}(\GRADs\vpT\vIT\vec{w})=\tens{\sigma}(\tlproj[T]{0}(\GRADs\vec{w}))=\tlproj[T]{0}(\tens{\sigma}(\GRADs\vec{w}))$ and obtain
  $$
  \mathrm{a}_h(\vIh\vec{w},\uvec{v}_h)
  = \sum_{T\in\Th}(\tlproj[T]{0}(\tens{\sigma}(\GRADs\vec{w})),\GRADs\vpT\uvec{v}_T)_T
  + (2\mu)~\mathrm{j}_h(\vph\vIh\vec{w},\vph\uvec{v}_h)  
  + (2\mu)~\mathrm{s}_h(\vIh\vec{w},\uvec{v}_h).
  $$
  After expanding, for all $T\in\Th$, $\GRADs\vpT\uvec{v}_T$ according to its definition \eqref{eq:vpT}, we deduce that
  $$
  \mathrm{a}_h(\vIh\vec{w},\uvec{v}_h)
  = \sum_{T\in\Th}\sum_{F\in\Fh[T]}(\tlproj[T]{0}(\tens{\sigma}(\GRADs\vec{w}))\normal_{TF},\vec{v}_F-\vec{v}_T)_F
  + (2\mu)~\mathrm{j}_h(\vph\vIh\vec{w},\vph\uvec{v}_h)  
  + (2\mu)~\mathrm{s}_h(\vIh\vec{w},\uvec{v}_h).
  $$
  Plugging the above relations into the expression \eqref{eq:Eh} of the consistency error, passing to absolute values, using a generalised H\"older inequality with exponents $(2,\infty,2)$ along with $\norm[L^\infty(F;\Real^d)]{\normal_{TF}}\le 1$ and $h_F\le h_T$ for the first term in the right-hand side, and Cauchy--Schwarz inequalities for the remaining terms, we get
  \begin{equation}\label{eq:consistency:basic}
    \begin{aligned}
      \left|\Cerr{\vec{w}}{\uvec{v}_h}\right|    
      &=
      \underbrace{%
        \left(
        \sum_{T\in\Th}h_T\norm[\partial T]{\tens{\sigma}(\GRADs\vec{w})_{|T}-\tlproj[T]{0}(\tens{\sigma}(\GRADs\vec{w}))}^2
        \right)^{\frac12}%
      }_{\term_1}
      \left(
      \sum_{T\in\Th}\sum_{F\in\Fh[T]}h_F^{-1}\norm[F]{\vec{v}_F-\vec{v}_T}^2
      \right)^{\frac12}
      \\
      &\qquad
      + \underbrace{%
        (2\mu)\seminorm[\mathrm{j},h]{\vph\vIh\vec{w}}%
      }_{\term_2}
      ~\seminorm[\mathrm{j},h]{\vph\uvec{v}_h}
      + \underbrace{%
        (2\mu)\seminorm[\mathrm{s},h]{\vIh\vec{w}}%
      }_{\term_3}
      ~\seminorm[\mathrm{s},h]{\uvec{v}_h}      
      \\
      &\lesssim 
      \left(\term_1+\term_2+\term_3\right)\tnorm[\strain,h]{\uvec{v}_h},
    \end{aligned}
  \end{equation}
  where we have used the second inequality in \eqref{eq:global.norm.equivalence} together with the definition \eqref{eq:tnorm.strain.h} of the triple-bar strain norm to conclude.
  Recalling the expression \eqref{eq:sigma} of the strain-stress law, we get, for any $T\in\Th$,
  \begin{equation}\label{eq:est.sigma}
    \begin{aligned}
      h_T^{\frac12}\norm[\partial T]{\tens{\sigma}(\GRADs\vec{w})_{|T}{-}\tlproj[T]{0}(\tens{\sigma}(\GRADs\vec{w}))}
      &\le (2\mu)h_T^{\frac12}\norm[\partial T]{\GRADs\vec{w}{-}(\tlproj[T]{0}\GRADs\vec{w})}
      + h_T^{\frac12}\norm[\partial T]{\lambda\DIV\vec{w}{-}\lproj[T]{0}(\lambda\DIV\vec{w})}
      \\
      &\lesssim
      h\left( (2\mu)\seminorm[H^2(T;\Real^d)]{\vec{w}} + \seminorm[H^1(T;\Real)]{\lambda\DIV\vec{w}}\right),
    \end{aligned}
  \end{equation}
  where we have used the approximation properties \eqref{eq:lproj:approx} of the $L^2$-orthogonal projector along with $h_T\le h$ to conclude.
  Using the above estimate, we infer for the first term
  \begin{equation}\label{eq:consistency:T1}
    \term_1\lesssim h\left(
    (2\mu)\seminorm[H^2(\Th;\Real^d)]{\vec{w}}
    + \seminorm[H^1(\Th;\Real)]{\lambda\DIV\vec{w}}
    \right).
  \end{equation}  
  Moving to the second term, we start by observing that
  $$
  \begin{aligned}
    \term_2^2
    &= (2\mu)^2~\seminorm[\mathrm{j},h]{\veproj{1}\vec{w}}^2
    \\
    &= (2\mu)^2\sum_{F\in\Fh}h_F^{-1}\norm[F]{\jump{\veproj{1}\vec{w}}}^2
    \\
    &= (2\mu)^2\sum_{F\in\Fh}h_F^{-1}\norm[F]{\jump{\veproj{1}\vec{w}-\vec{w}}}^2
    \\
    &\lesssim (2\mu)^2\sum_{F\in\Fh}\sum_{T\in\Th[F]}h_F^{-1}\norm[F]{\veproj[T]{1}\vec{w}-\vec{w}_{|T}}^2
    \\
    &\lesssim (2\mu)^2\sum_{T\in\Th}h_T^{-1}\norm[\partial T]{\veproj[T]{1}\vec{w}-\vec{w}_{|T}}^2
  \end{aligned}
  $$
  where we have used, in this order, the second relation in \eqref{eq:vpT:commutation}, the definition \eqref{eq:norm.dG} of the jump seminorm, the fact that the jumps of $\vec{w}$ vanish across any $F\in\Fh$, the definition \eqref{eq:jump} of the jump operator together with the triangle inequality, and the relation 
\begin{equation}  
  \label{eq:sumTF=sumFT}
  \sum_{T\in\Th}\sum_{F\in\Fh[T]} \bullet =\sum_{F\in\Fh}\sum_{T\in\Th[F]} \bullet
\end{equation}
to exchange the sums over elements and faces.
Hence, using the approximation properties \eqref{eq:eproj:approx} of the elliptic projector, $h_T\le h$, and taking the square root, we arrive at
  \begin{equation}\label{eq:consistency:T2}
    \term_2 \lesssim (2\mu) h \seminorm[H^2(\Th;\Real^d)]{\vec{w}}.
  \end{equation}
  For the third term, \eqref{eq:sh:consistency} readily gives
  \begin{equation}\label{eq:consistency:T3}
    \term_3\lesssim (2\mu) h\seminorm[H^2(\Th;\Real^d)]{\vec{w}}.
  \end{equation}  
  Plugging \eqref{eq:consistency:T1}, \eqref{eq:consistency:T2}, and \eqref{eq:consistency:T3} into \eqref{eq:consistency:basic} and passing to the supremum yields \eqref{eq:ah:consistency}.
\end{proof}

\subsection{Energy error estimate}

\begin{theorem}[Energy error estimate]\label{thm:err.est}
  Let $\vec{u}\in H_0^1(\Omega;\Real^d)$ denote the unique solution to \eqref{eq:weak}, for which we assume the additional regularity $\vec{u}\in H^2(\Th;\Real^d)$.
  For all $h\in{\cal H}$, let $\uvec{u}_h\in\vUhD$ denote the unique solution to \eqref{eq:discrete}.
  Then,
  \begin{equation}\label{eq:err.est}
    \tnorm[\strain,h]{\uvec{u}_h-\vIh\vec{u}}
    \lesssim \alpha^{-1} h\left(
    (2\mu)\seminorm[H^2(\Th;\Real^d)]{\vec{u}}
    + \seminorm[H^1(\Th;\Real)]{\lambda\DIV\vec{u}}
    \right),
  \end{equation}
  with hidden constant independent of $h$, $\vec{u}$, and of the problem data.
\end{theorem}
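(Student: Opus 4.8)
The plan is to invoke the abstract energy-error argument (a Third Strang Lemma), for which all the heavy machinery---coercivity and the consistency estimate---is already provided by Lemma~\ref{lem:ah}. I would set $\uvec{e}_h \coloneq \uvec{u}_h - \vIh\vec{u}$ and first verify that it is an admissible test function, that is, $\uvec{e}_h \in \vUhD$. This holds because $\vec{u} \in H_0^1(\Omega;\Real^d)$ has vanishing trace on $\partial\Omega$, so $\vlproj[F]{0}\vec{u}_{|F} = \vec{0}$ for every $F \in \Fhb$, whence $\vIh\vec{u} \in \vUhD$; since $\uvec{u}_h \in \vUhD$ as well, so is the difference.

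The core of the proof is the identity $\mathrm{a}_h(\uvec{e}_h, \uvec{e}_h) = \Cerr{\vec{u}}{\uvec{e}_h}$, which recasts the energy error as a consistency error. To establish it, I would expand by bilinearity $\mathrm{a}_h(\uvec{e}_h, \uvec{e}_h) = \mathrm{a}_h(\uvec{u}_h, \uvec{e}_h) - \mathrm{a}_h(\vIh\vec{u}, \uvec{e}_h)$, then use the discrete problem \eqref{eq:discrete} tested against $\uvec{e}_h$ to write $\mathrm{a}_h(\uvec{u}_h, \uvec{e}_h) = (\vec{f}, \vec{e}_h)$, and finally the strong form \eqref{eq:strong:pde}, namely $\vec{f} = -\DIV\tens{\sigma}(\GRADs\vec{u})$, to turn the load term into $-(\DIV\tens{\sigma}(\GRADs\vec{u}), \vec{e}_h)$. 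Comparing with the definition \eqref{eq:Eh} of the consistency error functional then yields the claimed identity.

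With this identity in hand, the estimate follows in two lines. Applying the stability bound in \eqref{eq:ah:stability} to $\uvec{e}_h$ together with the identity gives $\alpha \tnorm[\strain,h]{\uvec{e}_h}^2 \lesssim \mathrm{a}_h(\uvec{e}_h, \uvec{e}_h) = \Cerr{\vec{u}}{\uvec{e}_h}$, which I would bound by the dual norm as $\Cerr{\vec{u}}{\uvec{e}_h} \le \tnorm[\strain,h,*]{\Cerr{\vec{u}}{\cdot}}\,\tnorm[\strain,h]{\uvec{e}_h}$. Dividing through by $\tnorm[\strain,h]{\uvec{e}_h}$ (the bound being trivial if it vanishes) and invoking the consistency estimate \eqref{eq:ah:consistency}---legitimate since $\vec{u} \in H_0^1(\Omega;\Real^d) \cap H^2(\Th;\Real^d)$ and $\DIV\tens{\sigma}(\GRADs\vec{u}) = -\vec{f} \in L^2(\Omega;\Real^d)$---delivers \eqref{eq:err.est}.

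Since the two substantial ingredients are already established, there is no genuine obstacle here; the only points requiring care are the admissibility check $\vIh\vec{u} \in \vUhD$ (so that $\uvec{e}_h$ is a legitimate test function) and the verification that $\vec{u}$ meets the regularity hypotheses of the consistency part of Lemma~\ref{lem:ah}. The entire content of the proof is concentrated in the algebraic identity $\mathrm{a}_h(\uvec{e}_h, \uvec{e}_h) = \Cerr{\vec{u}}{\uvec{e}_h}$, which transforms the energy error into a consistency error amenable to \eqref{eq:ah:consistency}.
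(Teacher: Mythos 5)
Your proof is correct and follows essentially the same route as the paper: the paper simply black-boxes your argument by citing the abstract estimate $\tnorm[\strain,h]{\uvec{u}_h-\vIh\vec{u}}\le\alpha^{-1}\tnorm[\strain,h,*]{\Cerr{\vec{u}}{\cdot}}$ from \cite[Theorem 10]{Di-Pietro.Droniou:18} and then invokes \eqref{eq:ah:consistency}, whereas you unpack that abstract lemma explicitly via the identity $\mathrm{a}_h(\uvec{e}_h,\uvec{e}_h)=\Cerr{\vec{u}}{\uvec{e}_h}$, coercivity, and the dual-norm bound. The additional checks you perform (admissibility of $\vIh\vec{u}$ in $\vUhD$ and the regularity hypotheses of the consistency estimate) are exactly the hypotheses needed to apply the cited abstract result.
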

\begin{proof}
  Applying to the present setting the results of \cite[Theorem 10]{Di-Pietro.Droniou:18} gives the abstract estimate
  $$
  \tnorm[\strain,h]{\uvec{u}_h-\vIh\vec{u}}
  \le\alpha^{-1}\tnorm[\strain,h,*]{\Cerr{\vec{u}}{\cdot}}.
  $$
  Using the assumed regularity for the exact solution to invoke \eqref{eq:ah:consistency}, \eqref{eq:err.est} follows.
\end{proof}

\begin{remark}[Robustness in the quasi-incompressible limit]\label{rem:robustness}
  In the numerical approximation of linear elasticity problems, a key point consists in devising schemes that are robust in the quasi incompressible limit corresponding to $\frac{\lambda}{2\mu}\gg 1$ (which requires, in particular $\lambda\ge 0$).
  From a mathematical perspective, this property is expressed by the fact that the error estimates are uniform in $\lambda$.
  For $d=2$ and $\Omega$ convex, it is proved, e.g., in \cite[Lemma 2.2]{Brenner.Sung:92} that
  \begin{equation}\label{eq:a-priori}
    (2\mu)\norm[H^2(\Omega;\Real^d)]{\vec{u}}
    + \norm[H^1(\Omega;\Real)]{\lambda\DIV\vec{u}}
    \lesssim\norm{\vec{f}},
  \end{equation}
  with hidden constant possibly depending on $\Omega$ and $\mu$ but independent of $\lambda$.
  This result can be extended to $d=3$ reasoning as in the above reference and accounting for the regularity estimates for the Stokes problem derived in \cite[Theorem 3]{Amrouche.Girault:91}.  
  Plugging \eqref{eq:a-priori} into \eqref{eq:err.est} and observing that, when $\lambda\ge 0$, we can take $\alpha=2\mu$ (cf. \eqref{eq:lambda.mu.bounds}), we can write, with hidden constant independent of both $h$ and $\lambda$,
  \begin{equation}\label{eq:en.err.est.robust}
    \tnorm[\strain,h]{\uvec{u}_h-\vIh\vec{u}}\lesssim h \norm{\vec{f}},
  \end{equation}
  which shows that our error estimate \eqref{eq:err.est} is uniform in $\lambda$.
  The key point to obtain robustness is the first commutation property in \eqref{eq:vpT:commutation}, which is used to estimate the term $\mathcal{T}_1$ in the proof of Lemma \ref{lem:ah}.
\end{remark}

\begin{remark}[Quasi-optimality of the error estimate]
  It follows from the second inequality in \eqref{eq:ah:stability} that the bilinear form $\mathrm{a}_h$ is bounded with boundedness constant independent of $h$.
  Hence, following \cite[Remark 11]{Di-Pietro.Droniou:18}, the error estimate \eqref{eq:err.est} is quasi-optimal.
\end{remark}

\begin{remark}[{Energy estimate in the $\norm[\mathrm{a},h]{{\cdot}}$-norm for $\lambda\ge 0$}]\label{rem:err.est:norm.a.h}
  When $\lambda\ge0$, a consistency estimate in $h$ holds for $\norm[\mathrm{a},h,*]{\Cerr{\vec{w}}{\cdot}}$, the norm of the consistency error linear form dual to $\norm[\mathrm{a},h]{{\cdot}}$ (see \eqref{eq:ah:stability}).
  To see it, observe that, from \eqref{eq:consistency:basic} together with $(2\mu)^{\frac12}\tnorm[\strain,h]{\uvec{v}_h}\le\norm[\mathrm{a},h]{\uvec{v}_h}$ (a consequence of the assumption $\lambda\ge 0$), it follows $\left|\Cerr{\vec{w}}{\uvec{v}_h}\right|\lesssim\left(\term_1+\term_2+\term_3\right)(2\mu)^{-\frac12}\norm[\mathrm{a},h]{\uvec{v}_h}$.
  Hence, passing to the supremum over $\big\{\uvec{v}_h\in\vUhD\st\norm[\mathrm{a},h]{\uvec{v}_h}=1\big\}$, we infer
  $$
  \norm[\mathrm{a},h,*]{\Cerr{\vec{w}}{\cdot}}
  \lesssim h\left((2\mu)^{\frac12}\seminorm[H^2(\Th;\Real^d)]{\vec{w}}
                 +(2\mu)^{-\frac12}\seminorm[H^1(\Th;\Real)]{\lambda\DIV\vec{w}}\right).
  $$
  Invoking again \cite[Theorem 10]{Di-Pietro.Droniou:18}, this time with $\vUhD$ equipped with the $\norm[\mathrm{a},h]{{\cdot}}$-norm, it is inferred
$$
    \norm[\mathrm{a},h]{\uvec{u}_h-\vIh\vec{u}}
    \lesssim h\left(
    (2\mu)^{\frac12}\seminorm[H^2(\Th;\Real^d)]{\vec{u}}
    + (2\mu)^{-\frac12}\seminorm[H^1(\Th;\Real)]{\lambda\DIV\vec{u}}
    \right),
$$
  with hidden constant having the same dependencies as in \eqref{eq:err.est}.
\end{remark}

\subsection{Improved $L^2$-error estimate}

Combining the discrete Korn--Poincar\'e inequality \eqref{eq:korn-poincare} with the error estimate \eqref{eq:err.est}, we can infer an estimate in $h$ for the $L^2$-norm of the displacement error $\norm{\vec{u}_h-\vlproj{0}\vec{u}}$, where we remind the reader that $\vec{u}_h$ is defined according to \eqref{eq:vh} as the broken polynomial obtained patching element unknowns, while $\vlproj{0}\vec{u}$ is the $L^2$-orthogonal projection of the exact solution on $\Poly{0}(\Th;\Real^d)$.
It is well-known, however, that improved $L^2$-error estimates can be derived in the context of HHO methods when elliptic regularity holds.
In this section, we show that the same is true for the low-order method considered in this work.
For the sake of simplicity, we assume throughout this section that
$$
\lambda\ge 0.
$$
This assumption could be removed, but we keep it here to simplify the discussion and point out the robustness in the quasi-incompressible limit.
Recalling the discussion in Remark \ref{rem:robustness}, elliptic regularity for our problem entails that, for all $\vec{g}\in L^2(\Omega;\Real^d)$, the unique solution of the (dual) problem:
Find $\vec{z}_{\vec{g}}\in H_0^1(\Omega;\Real^d)$ such that
\begin{equation}\label{eq:dual}
  (\tens{\sigma}(\GRADs\vec{z}_{\vec{g}}),\GRADs\vec{v}) = (\vec{g},\vec{v})
  \qquad\forall\vec{v}\in H_0^1(\Omega;\Real^d)
\end{equation}
satisfies the a priori estimate
\begin{equation}\label{eq:dual:a-priori}
  (2\mu)~\norm[H^2(\Omega;\Real^d)]{\vec{z}_{\vec{g}}}
  + \norm[H^1(\Omega;\Real)]{\lambda\DIV\vec{z}_{\vec{g}}}
  \lesssim\norm{\vec{g}},
\end{equation}
with hidden constant only depending on $\Omega$ and $\mu$.

\begin{theorem}[Improved $L^2$-error estimate]\label{thm:l2.err.est}
  Under the assumptions and notations of Theorem \ref{thm:err.est}, and further assuming $\lambda\ge 0$, elliptic regularity, and $\vec{f}\in H^1(\Th;\Real^d)$, it holds that
  \begin{equation}\label{eq:l2.err.est}
    \norm{\vec{u}_h-\vlproj{0}\vec{u}}
    \lesssim h^2\norm[H^1(\Th;\Real^d)]{\vec{f}},
  \end{equation}
  where the hidden constant is independent of both $h$ and $\lambda$ (but possibly depends on $\mu$).
\end{theorem}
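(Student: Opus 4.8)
The plan is to run an Aubin--Nitsche duality argument, exploiting the symmetry of $\mathrm{a}_h$ and the elliptic regularity \eqref{eq:dual:a-priori}. Write $\uvec{e}_h\coloneq\uvec{u}_h-\vIh\vec{u}\in\vUhD$, let $\vec{e}_h$ be its element component patched as in \eqref{eq:vh}, and note that $\vec{e}_h=\vec{u}_h-\vlproj{0}\vec{u}$ is exactly the quantity to be estimated. I would feed $\vec{g}\coloneq\vec{e}_h$ as the datum of the dual problem \eqref{eq:dual}, let $\vec{z}\coloneq\vec{z}_{\vec{g}}$ be its solution, and record from \eqref{eq:dual:a-priori} the bounds $2\mu\seminorm[H^2(\Th;\Real^d)]{\vec{z}}\lesssim\norm{\vec{g}}$, $\seminorm[H^1(\Th;\Real)]{\lambda\DIV\vec{z}}\lesssim\norm{\vec{g}}$, and (through Poincar\'e) $\seminorm[H^1(\Th;\Real^d)]{\vec{z}}\lesssim\norm{\vec{g}}$ up to a factor depending on $\mu$.

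Next I would derive an error representation for $\norm{\vec{g}}^2$. Since $\vIh\vec{z}\in\vUhD$, the consistency identity \eqref{eq:Eh} applied to $\vec{z}$ (for which $\DIV\tens{\sigma}(\GRADs\vec{z})=-\vec{g}$) and tested with $\uvec{e}_h$ gives $\Cerr{\vec{z}}{\uvec{e}_h}=(\vec{g},\vec{e}_h)-\mathrm{a}_h(\vIh\vec{z},\uvec{e}_h)=\norm{\vec{g}}^2-\mathrm{a}_h(\vIh\vec{z},\uvec{e}_h)$. Using the symmetry of $\mathrm{a}_h$, the discrete equation \eqref{eq:discrete} with test function $\vIh\vec{z}$, and the identity \eqref{eq:Eh} for $\vec{u}$, the term $\mathrm{a}_h(\vIh\vec{z},\uvec{e}_h)=\mathrm{a}_h(\uvec{u}_h,\vIh\vec{z})-\mathrm{a}_h(\vIh\vec{u},\vIh\vec{z})$ collapses to $\Cerr{\vec{u}}{\vIh\vec{z}}$, the data contributions $(\vec{f},\vlproj{0}\vec{z})$ cancelling. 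This yields the clean representation $\norm{\vec{g}}^2=\Cerr{\vec{z}}{\uvec{e}_h}+\Cerr{\vec{u}}{\vIh\vec{z}}$.

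The first term is routine: by definition of the dual norm, $|\Cerr{\vec{z}}{\uvec{e}_h}|\le\tnorm[\strain,h,*]{\Cerr{\vec{z}}{\cdot}}\,\tnorm[\strain,h]{\uvec{e}_h}$. The consistency estimate \eqref{eq:ah:consistency} applied to $\vec{z}$, combined with the regularity bounds above, gives $\tnorm[\strain,h,*]{\Cerr{\vec{z}}{\cdot}}\lesssim h\norm{\vec{g}}$, while the energy estimate \eqref{eq:err.est} together with the a priori bound \eqref{eq:a-priori} for $\vec{u}$ gives $\tnorm[\strain,h]{\uvec{e}_h}\lesssim h\norm{\vec{f}}$; hence $|\Cerr{\vec{z}}{\uvec{e}_h}|\lesssim h^2\norm{\vec{f}}\,\norm{\vec{g}}$, with constants robust in $\lambda$.

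The crux, and where I expect the real work, is the adjoint-consistency term $\Cerr{\vec{u}}{\vIh\vec{z}}$: the naive dual-norm bound only delivers $O(h)$, because $\tnorm[\strain,h]{\vIh\vec{z}}=O(1)$. I would instead expand it as in the proof of Lemma \ref{lem:ah}(ii), using the commutation properties \eqref{eq:vpT:commutation}, into a stress sum over $\Th$ and $\Fh[T]$ of terms $((\tens{\sigma}(\GRADs\vec{u})_{|T}-\tlproj[T]{0}\tens{\sigma}(\GRADs\vec{u}))\normal_{TF},\vlproj[F]{0}\vec{z}-\vlproj[T]{0}\vec{z})_F$, the jump form $-(2\mu)\mathrm{j}_h(\veproj{1}\vec{u},\veproj{1}\vec{z})$, and the stabilisation form $-(2\mu)\mathrm{s}_h(\vIh\vec{u},\vIh\vec{z})$. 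The last two are $O(h^2)$ by Cauchy--Schwarz, since both arguments interpolate $H^2$ functions and each factor carries one power of $h$, via \eqref{eq:eproj:approx} for the jumps and \eqref{eq:sh:consistency} for the stabilisation. The stress sum is the genuine obstacle. Splitting $\vlproj[F]{0}\vec{z}-\vlproj[T]{0}\vec{z}$ into the affine part coming from $\veproj[T]{1}\vec{z}$ plus a remainder controlled by \eqref{eq:eproj:approx}, the remainder is directly $O(h^2)$, but the affine, face-constant part is only $O(h)$ element by element. To recover the missing power I would write this face-constant vector through the centroids as $\GRAD(\veproj[T]{1}\vec{z})(\overline{\vec{x}}_F-\overline{\vec{x}}_T)$ and use $\overline{\vec{x}}_F-\overline{\vec{x}}_T=(\vec{x}-\overline{\vec{x}}_T)-(\vec{x}-\overline{\vec{x}}_F)$: for the $(\vec{x}-\overline{\vec{x}}_T)$ piece an integration by parts over $T$ reintroduces $\DIV\tens{\sigma}(\GRADs\vec{u})=-\vec{f}$ and, since $\vec{x}-\overline{\vec{x}}_T$ is mean-zero on $T$, produces the data oscillation $\norm{\vec{f}-\vlproj{0}\vec{f}}\lesssim h\seminorm[H^1(\Th;\Real^d)]{\vec{f}}$ (here the hypothesis $\vec{f}\in H^1(\Th;\Real^d)$ enters); for the $(\vec{x}-\overline{\vec{x}}_F)$ piece, which is mean-zero on each face, I would sum over the \emph{two} elements sharing each interior face and invoke the continuity of the normal tractions $\tens{\sigma}(\GRADs\vec{u})\normal$ of the exact solution (already used in Lemma \ref{lem:ah}(ii)) so that the two leading contributions cancel, the survivor being governed by the jump across the face of the cell-averaged gradient of $\vec{z}$, which is $O(h)$ smaller. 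Collecting the pieces gives $|\Cerr{\vec{u}}{\vIh\vec{z}}|\lesssim h^2\norm[H^1(\Th;\Real^d)]{\vec{f}}\,\norm{\vec{g}}$ with $\lambda$-robust constants; combined with the bound on the first term and simplification by $\norm{\vec{g}}=\norm{\vec{u}_h-\vlproj{0}\vec{u}}$, this yields \eqref{eq:l2.err.est}.
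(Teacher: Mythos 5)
Your overall architecture coincides with the paper's. The identity $\norm{\vec{g}}^2=\Cerr{\vec{z}}{\uvec{e}_h}+\Cerr{\vec{u}}{\vIh\vec{z}}$ with $\vec{g}=\vec{u}_h-\vlproj{0}\vec{u}$ is exactly the abstract estimate \eqref{eq:l2.basic.est} invoked from \cite{Di-Pietro.Droniou:18} (you re-derive it by hand, which is fine); your bound on the first addend is the paper's estimate of $\term_1$; and your treatment of the jump and stabilisation contributions to $\Cerr{\vec{u}}{\vIh\vec{z}}$ is the paper's \eqref{eq:T2:3+4}. The divergence is in the stress part of the adjoint-consistency term, and there your argument has a genuine gap. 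You expand the consistency contribution into face terms $\big((\tens{\sigma}(\GRADs\vec{u})_{|T}-\tlproj[T]{0}\tens{\sigma}(\GRADs\vec{u}))\normal_{TF},\vlproj[F]{0}\vec{z}-\vlproj[T]{0}\vec{z}\big)_F$ and recover the second power of $h$ for the face-constant affine part through a two-element cancellation based on the continuity of $\tens{\sigma}(\GRADs\vec{u})\normal$ across interfaces. This cancellation is unavailable on boundary faces, where only one element is present: there the $(\vec{x}-\overline{\vec{x}}_F)$ piece contributes, after subtracting the face average of the traction, a term of size $h^{\frac12}\seminorm[H^1(T;\Real^{d\times d})]{\tens{\sigma}}\cdot h\meas{F}^{\frac12}\meas{T}^{-\frac12}\norm[T]{\GRAD\vec{z}}\simeq h\seminorm[H^1(T;\Real^{d\times d})]{\tens{\sigma}}\norm[T]{\GRAD\vec{z}}$, and summing over the $O(h^{-(d-1)})$ boundary faces leaves you at $O(h)$ (at best $O(h^{3/2})$ using the smallness of the boundary strip), since the normal derivative of $\vec{z}$ does not vanish on $\partial\Omega$. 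The sketch also leaves the bookkeeping of the interior ``survivor'' terms unverified, though that part can be made to close.

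The missing idea is that no face-based expansion is needed for this term. Since $\GRADsh\vph\vIh\vec{u}=\tlproj{0}(\GRADs\vec{u})$ by \eqref{eq:vpT:commutation}, the consistency part of $\mathrm{a}_h(\vIh\vec{u},\vIh\vec{z})$ can be kept in volumetric form as $(\tlproj{0}(\tens{\sigma}(\GRADs\vec{u})),\tlproj{0}\GRADs\vec{z})$. Writing $(\vec{f},\vlproj{0}\vec{z})=(\vlproj{0}\vec{f}-\vec{f},\vec{z}-\vlproj{0}\vec{z})+(\tens{\sigma}(\GRADs\vec{u}),\GRADs\vec{z})$ (the first piece being $O(h^2)$ by double orthogonality of $\vlproj{0}$, which is where $\vec{f}\in H^1(\Th;\Real^d)$ enters), the pairing
\[
(\tens{\sigma}(\GRADs\vec{u}),\GRADs\vec{z})-(\tlproj{0}(\tens{\sigma}(\GRADs\vec{u})),\tlproj{0}\GRADs\vec{z})
=(\tens{\sigma}(\GRADs\vec{u})-\tlproj{0}(\tens{\sigma}(\GRADs\vec{u})),\GRADs\vec{z}-\tlproj{0}(\GRADs\vec{z}))
\]
delivers two factors of $h$ at once via \eqref{eq:lproj:approx}, with no boundary-face issue and with $\lambda$-robust constants through \eqref{eq:est.sigma}. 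You should replace your integration-by-parts/traction-cancellation step with this orthogonality argument, or else supply a separate treatment of the boundary faces.
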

\begin{proof}
  Inside the proof, hidden constants have the same dependencies as in \eqref{eq:l2.err.est}.
  Applying the results of \cite[Theorem 13]{Di-Pietro.Droniou:18} to the present setting gives the basic estimate
  \begin{equation}\label{eq:l2.basic.est}
    \norm{\vec{u}_h-\vlproj{0}\vec{u}}
    \le
    \tnorm[\strain,h]{\uvec{u}_h-\vIh\vec{u}}
         {\sup_{\vec{g}\in L^2(\Omega;\Real^d),\norm{\vec{g}}\le 1}\tnorm[\strain,h,*]{\Cerr{\vec{z}_{\vec{g}}}{\cdot}}}
         + {\sup_{\vec{g}\in L^2(\Omega;\Real^d),\norm{\vec{g}}\le 1}\Cerr{\vec{u}}{\vIh\vec{z}_{\vec{g}}}}.
  \end{equation}
  We proceed to bound the addends in the right-hand side, denoted for the sake of brevity $\term_1$ and $\term_2$.
  \medskip\\
  (i) \emph{Estimate of $\term_1$.}
  Since $\vec{z}_{\vec{g}}\in H_0^1(\Omega;\Real^d)\cap H^2(\Omega;\Real^d)$, the consistency estimate \eqref{eq:ah:consistency} followed by the elliptic regularity bound \eqref{eq:dual:a-priori} yield, for any $\vec{g}\in L^2(\Omega;\Real^d)$,
  $$
  \tnorm[\strain,h,*]{\Cerr{\vec{z}_{\vec{g}}}{\cdot}}
  \lesssim h\left(
  (2\mu)\seminorm[H^2(\Th;\Real^d)]{\vec{z}_{\vec{g}}}
  + \seminorm[H^1(\Th;\Real)]{\lambda\DIV\vec{z}_{\vec{g}}}  
  \right)
  \lesssim h\norm{\vec{g}}.
  $$
  Combined with the energy error estimate \eqref{eq:en.err.est.robust}, this yields
  \begin{equation}\label{eq:l2.err.est:T1}
    \term_1\lesssim h^2\norm{\vec{f}}.
  \end{equation}
  \\
  (ii) \emph{Estimate of $\term_2$.}
  Recalling the expression \eqref{eq:Eh} of the consistency error, expanding the bilinear form $\mathrm{a}_h$ according to its definition \eqref{eq:ah} with $\uvec{w}_h=\vIh\vec{u}$ and $\uvec{v}_h=\vIh\vec{z}_{\vec{g}}$, and invoking  \eqref{eq:vpT:commutation} to replace $\vph\vIh$ with $\veproj{1}$ and $\GRADsh\vph\vIh$ with $\tlproj{0}\GRADs$, we can write
  \begin{multline}\label{eq:l2.err.est:T2:1}
    \Cerr{\vec{u}}{\vIh\vec{z}_{\vec{g}}}
    = (-\DIV\tens{\sigma}(\GRADs\vec{u}),\vlproj{0}\vec{z}_{\vec{g}})
    - (\tlproj{0}(\tens{\sigma}(\GRADs\vec{u})),\tlproj{0}\GRADs\vec{z}_{\vec{g}})
    \\
    -(2\mu)~\mathrm{j}_h(\veproj{1}\vec{u},\veproj{1}\vec{z}_{\vec{g}})
    -(2\mu)~\mathrm{s}_h(\vIh\vec{u},\vIh\vec{z}_{\vec{g}}).
  \end{multline}
  We have that
  $$
  \begin{aligned}
    -(\DIV\tens{\sigma}(\GRADs\vec{u}),\vlproj{0}\vec{z}_{\vec{g}})
    =
    (\vec{f},\vlproj{0}\vec{z}_{\vec{g}})
    &=
    (\vlproj{0}\vec{f},\vec{z}_{\vec{g}})
    \\
    &=
    (\vlproj{0}\vec{f}-\vec{f},\vec{z}_{\vec{g}})
    + (\tens{\sigma}(\GRADs\vec{u}),\GRADs\vec{z}_{\vec{g}})
    \\
    &=
    (\vlproj{0}\vec{f}-\vec{f},\vec{z}_{\vec{g}}-\vlproj{0}\vec{z}_{\vec{g}})
    + (\tens{\sigma}(\GRADs\vec{u}),\GRADs\vec{z}_{\vec{g}}),
  \end{aligned}
  $$
  where we have used the fact that \eqref{eq:strong:pde} holds almost everywhere in $\Omega$ to replace $-\DIV\tens{\sigma}(\GRADs\vec{u})$ with $\vec{f}$ along with the definitions \eqref{eq:lproj.h} and \eqref{eq:lproj} of the global and local $L^2$-orthogonal projectors in the first line,
  we have added the quantity $(\vec{f},\vec{z}_{\vec{g}})-(\tens{\sigma}(\GRADs\vec{u}),\GRADs\vec{z}_{\vec{g}})=0$ (see \eqref{eq:weak}) in the second line,
  while, to pass to the third line, we have used the fact that, by definition of $\vlproj{0}$, the function $(\vlproj{0}\vec{f}-\vec{f})$ is $L^2(\Omega;\Real^d)$-orthogonal to $\Poly{0}(\Th;\Real^d)$ to insert $\vlproj{0}\vec{z}_{\vec{g}}$ into the first term.
  The Cauchy--Schwarz inequality and the approximation property \eqref{eq:lproj:approx} of the $L^2$-orthogonal projector inside each mesh element yield for the first term in the right-hand side
  \begin{equation}\label{eq:T2:1}
    \left|
    (\vlproj{0}\vec{f}-\vec{f},\vec{z}_{\vec{g}}-\vlproj{0}\vec{z}_{\vec{g}})
    \right|
    \lesssim h^2\seminorm[H^1(\Th;\Real^d)]{\vec{f}}\seminorm[H^1(\Omega;\Real^d)]{\vec{z}_{\vec{g}}}
    \lesssim h^2\seminorm[H^1(\Th;\Real^d)]{\vec{f}}\norm{\vec{g}},
  \end{equation}
  where we have used a standard estimate on $\seminorm[H^1(\Omega;\Real^d)]{\vec{z}_{\vec{g}}}$ obtained letting $\vec{v}=\vec{z}_{\vec{g}}$ in \eqref{eq:dual} and using the Cauchy--Schwarz and Korn inequalities to bound the right-hand side.
  On the other hand, using the definitions \eqref{eq:lproj.h} and \eqref{eq:lproj} of the global and local $L^2$-orthogonal projectors, we can write 
  \begin{equation}\label{eq:T2:2}
    \begin{aligned}
      &\left|
      \big(\tens{\sigma}(\GRADs\vec{u}),\GRADs\vec{z}_{\vec{g}}
      \big)
      - \big(
      \tlproj{0}(\tens{\sigma}(\GRADs\vec{u})),\tlproj{0}\GRADs\vec{z}_{\vec{g}}
      \big)
      \right|
      \\
      &\qquad
      =\left|
      \big(
      \tens{\sigma}(\GRADs\vec{u})-\vlproj{0}(\tens{\sigma}(\GRADs\vec{u})),\GRADs\vec{z}_{\vec{g}}-\tlproj{0}(\GRADs\vec{z}_{\vec{g}})
      \big)
      \right|
      \\
      &\qquad
      \le
      \norm{\tens{\sigma}(\GRADs\vec{u})-\tlproj{0}(\tens{\sigma}(\GRADs\vec{u}))}
      ~\norm{\GRADs\vec{z}_{\vec{g}}-\tlproj{0}(\GRADs\vec{z}_{\vec{g}})}
      \\
      &\qquad
      \lesssim h^2\left(
      (2\mu)\seminorm[H^2(\Omega;\Real^d)]{\vec{u}} + \seminorm[H^1(\Omega;\Real^d)]{\lambda\DIV\vec{u}}
      \right)\seminorm[H^2(\Omega;\Real^d)]{\vec{z}_{\vec{g}}}
      \\
      &\qquad
      \lesssim h^2\norm{\vec{f}}~\norm{\vec{g}},
    \end{aligned}
  \end{equation}
  where we have used a Cauchy--Schwarz inequality to pass to the third line,
  \eqref{eq:est.sigma} with $\vec{w}=\vec{u}$ together with the approximation property \eqref{eq:lproj:approx} of the $L^2$-orthogonal projector to pass to the fourth line,
  and the elliptic regularity bound \eqref{eq:dual:a-priori} to conclude.
  Finally, using Cauchy--Schwarz inequalities, we can write
  \begin{equation}\label{eq:T2:3+4}
    \begin{aligned}
      (2\mu)~\mathrm{j}_h(\veproj{1}\vec{u},\veproj{1}\vec{z}_{\vec{g}})
      +(2\mu)~\mathrm{s}_h(\vIh\vec{u},\vIh\vec{z}_{\vec{g}})
      &\lesssim
      (2\mu)~\seminorm[\mathrm{j},h]{\veproj{1}\vec{u}}
      \seminorm[\mathrm{j},h]{\veproj{1}\vec{z}_{\vec{g}}}
      + (2\mu)~\seminorm[\mathrm{s},h]{\vIh\vec{u}}
      \seminorm[\mathrm{s},h]{\vIh\vec{z}_{\vec{g}}}
      \\
      &\lesssim
      (2\mu)h^2\seminorm[H^2(\Omega;\Real^d)]{\vec{u}}
      \seminorm[H^2(\Omega;\Real^d)]{\vec{z}_{\vec{g}}}
      \lesssim
      h^2\norm{\vec{f}}~\norm{\vec{g}},
    \end{aligned}    
  \end{equation}
  where, to pass to the second line, we have used \eqref{eq:sh:consistency} for the terms involving $\mathrm{s}_h$ and we have proceeded as in the estimate of $\term_2$ in the proof of point (ii) of Lemma \ref{lem:ah} for the terms involving $\mathrm{j}_h$ while, to conclude, we have invoked \eqref{eq:dual:a-priori}.
  Taking absolute values in \eqref{eq:l2.err.est:T2:1} and using the estimates \eqref{eq:T2:1}, \eqref{eq:T2:2}, \eqref{eq:T2:3+4} yields $\left|\Cerr{\vec{u}}{\vIh\vec{z}_{\vec{g}}}\right|\lesssim h^2\norm[H^1(\Th;\Real^d)]{\vec{f}}\norm{\vec{g}}$.
  Hence, passing to the supremum, we obtain
  \begin{equation}\label{eq:l2.err.est:T2}
    \term_2
    \lesssim h^2\norm[H^1(\Th;\Real^d)]{\vec{f}}.
  \end{equation}
  Plugging \eqref{eq:l2.err.est:T1} and \eqref{eq:l2.err.est:T2} into \eqref{eq:l2.basic.est} concludes the proof.
\end{proof}


\section{Numerical tests}\label{sec:numerical.tests}

In what follows we verify, through numerical examples, the results stated in the previous section.


\subsection{Two-dimensional quasi-incompressible case}\label{sec:numerical.tests:brenner}

The first test case is inspired by \cite{Brenner:93}: we solve on the unit square $\Omega=(0,1)^2$ the homogeneous Dirichlet problem corresponding to the exact solution such that
$$
\renewcommand{\arraystretch}{1.2}
\vec{u}(\vec{x}) =
\begin{pmatrix}
  (\cos(2\pi x_1)-1)\sin(2\pi x_2)+\frac{1}{1+\lambda}\sin(\pi x_1)\sin(\pi x_2)
  \\
  (1-\cos(2\pi x_2))\sin(2\pi x_1)+\frac{1}{1+\lambda}\sin(\pi x_1)\sin(\pi x_2)
\end{pmatrix}.
$$
The corresponding forcing term is 
$$
\renewcommand{\arraystretch}{1.2}
\vec{f}(\vec{x}) =
\begin{pmatrix}
  -\mu\left[
    4\sin(2\pi x_2)\left(1-2\cos(2\pi x_1)\right)
    -\frac{2}{1+\lambda}\sin(\pi x_1)\sin(\pi x_2)
    \right]
  - \frac{\lambda+\mu}{1+\lambda}\cos(\pi(x_1+x_2))
  \\
  -\mu\left[
    4\sin(2\pi x_1)\left(2\cos(2\pi x_2)-1\right)
    -\frac{2}{1+\lambda}\sin(\pi x_1)\sin(\pi x_2)
    \right]
  - \frac{\lambda+\mu}{1+\lambda}\cos(\pi(x_1+x_2))
\end{pmatrix}.
$$
We take $\mu=1$ and, in order to assess the robustness of the method in the quasi-incompressible limit, we let $\lambda$ vary in $\{1,10^3,10^6\}$.
For the numerical solution, we consider structured and unstructured triangular, Cartesian orthogonal, and deformed quadrangular mesh families; see Figure \ref{fig:numerical.results:brenner:meshes}.
The solutions corresponding to $\lambda=1$ and $\lambda=10^6$ on the finest Cartesian orthogonal mesh are represented in Figure \ref{fig:numerical.results:brenner:solution}, where we have plotted the components of the global displacement reconstruction obtained from the discrete solution according to \eqref{eq:vph}.

The numerical results are collected in Tables \ref{tab:numerical:results:brenner:tria}--\ref{tab:numerical:results:brenner:distQuad}, where the following quantities are monitored:
$\Ndofs$, the number of degrees of freedom;
$\Nnz$, the number of non-zero entries in the problem matrix;
$\tnorm[\mathrm{a},h]{\uvec{u}_h-\vIh\vec{u}}$, the energy-norm of the error;
$\norm{\vec{u}_h-\vlproj{0}\vec{u}}$, the $L^2$-norm of the error estimated in Theorem \ref{thm:l2.err.est}.
Notice that, in view of Remark \ref{rem:err.est:norm.a.h}, in this and in the following numerical tests the energy error is measured using the $\norm[\mathrm{a},h]{{\cdot}}$-norm, whose computation can be done using the already assembled problem matrix.
We additionally display the Estimated Order of Convergence (EOC) which, denoting by $e_i$ an error measure on the $i$th mesh refinement with meshsize $h_i$, is computed as
$$
  {\rm EOC} = \frac{\log e_i - \log e_{i+1}}{\log h_i - \log h_{i+1}}.
$$
In all the cases, the asymptotic EOC match the ones predicted by the theory, that is, $1$ for the energy-norm of the error and $2$ for the $L^2$-norm.
The results additionally highlight the robustness of the method in the quasi-incompressible limit (see Remark \ref{rem:robustness}) and with respect to the mesh, showing errors of comparable magnitude irrespectively of the value of $\lambda$ and of the selected mesh family.

\begin{figure}\centering
  \begin{minipage}{0.35\textwidth}\centering
    \includegraphics[height=4.25cm]{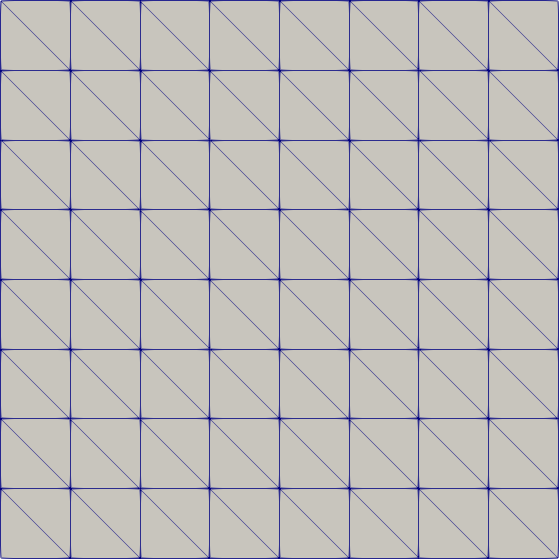}
    \subcaption{Structured triangular mesh}
  \end{minipage}
  \begin{minipage}{0.35\textwidth}\centering
    \includegraphics[height=4.25cm]{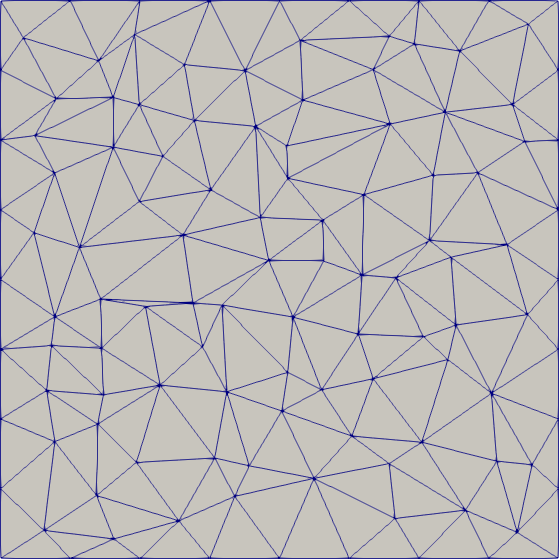}
    \subcaption{Unstructured triangular mesh}
  \end{minipage}
  \vspace{0.25cm} \\
  \begin{minipage}{0.35\textwidth}\centering
    \includegraphics[height=4.25cm]{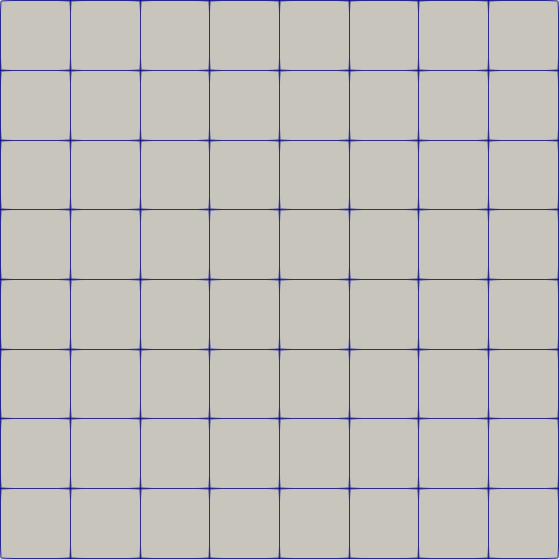}
    \subcaption{Cartesian orthogonal mesh}
  \end{minipage}
  \begin{minipage}{0.35\textwidth}\centering
    \includegraphics[height=4.25cm]{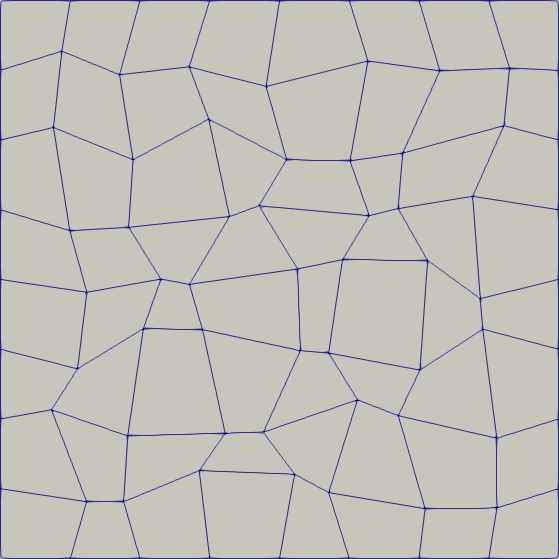}
    \subcaption{Distorted quadrangular mesh}
  \end{minipage}
  \caption{Meshes for the numerical test of Section \ref{sec:numerical.tests:brenner}.\label{fig:numerical.results:brenner:meshes}}
\end{figure}

\begin{figure}\centering
  \begin{minipage}{0.40\textwidth}\centering
    \includegraphics[width=5cm]{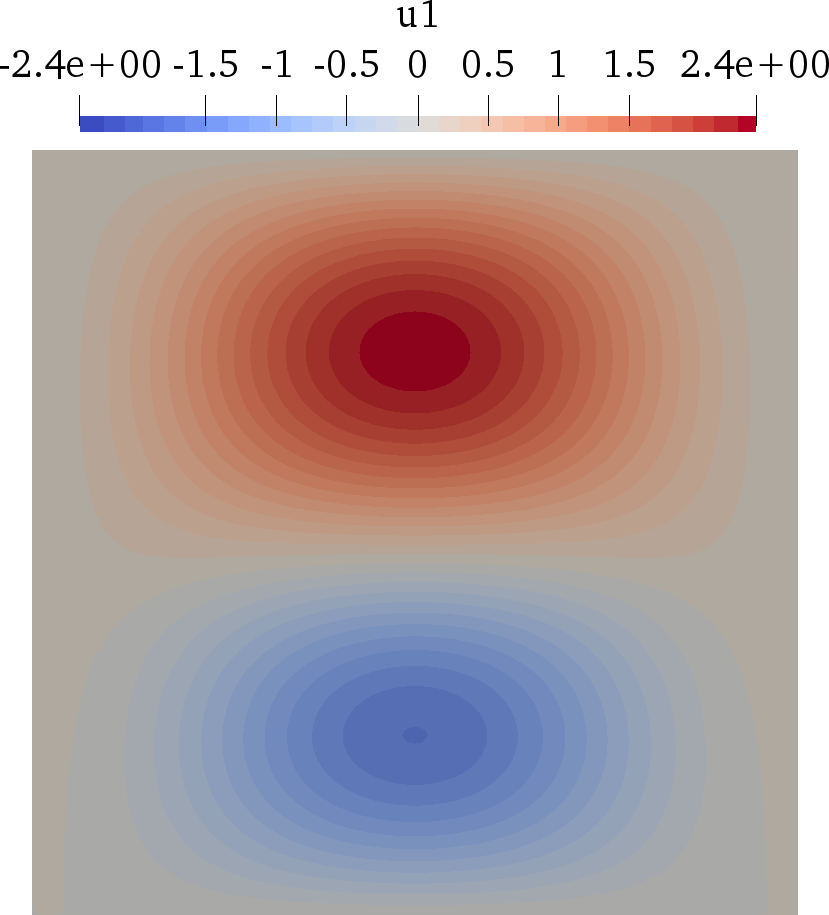}
    \subcaption{$\lambda=1$, $u_1$}
  \end{minipage}
  \begin{minipage}{0.40\textwidth}\centering
    \includegraphics[width=5cm]{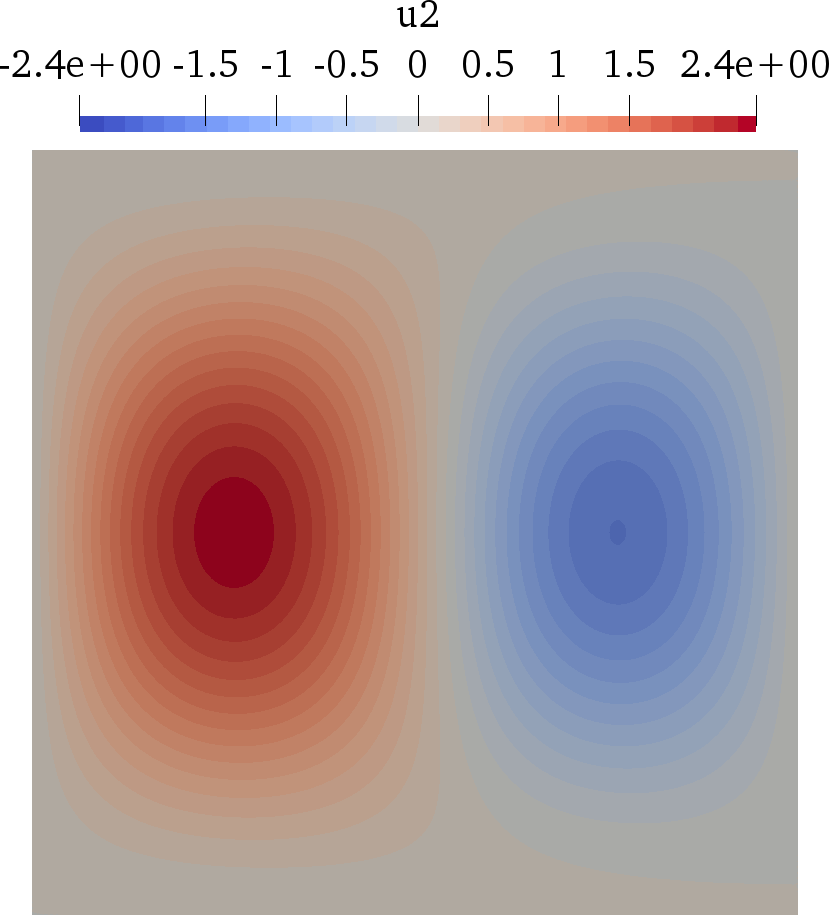}
    \subcaption{$\lambda=1$, $u_2$}
  \end{minipage}
  \vspace{0.25cm} \\
  \begin{minipage}{0.40\textwidth}\centering
    \includegraphics[width=5cm]{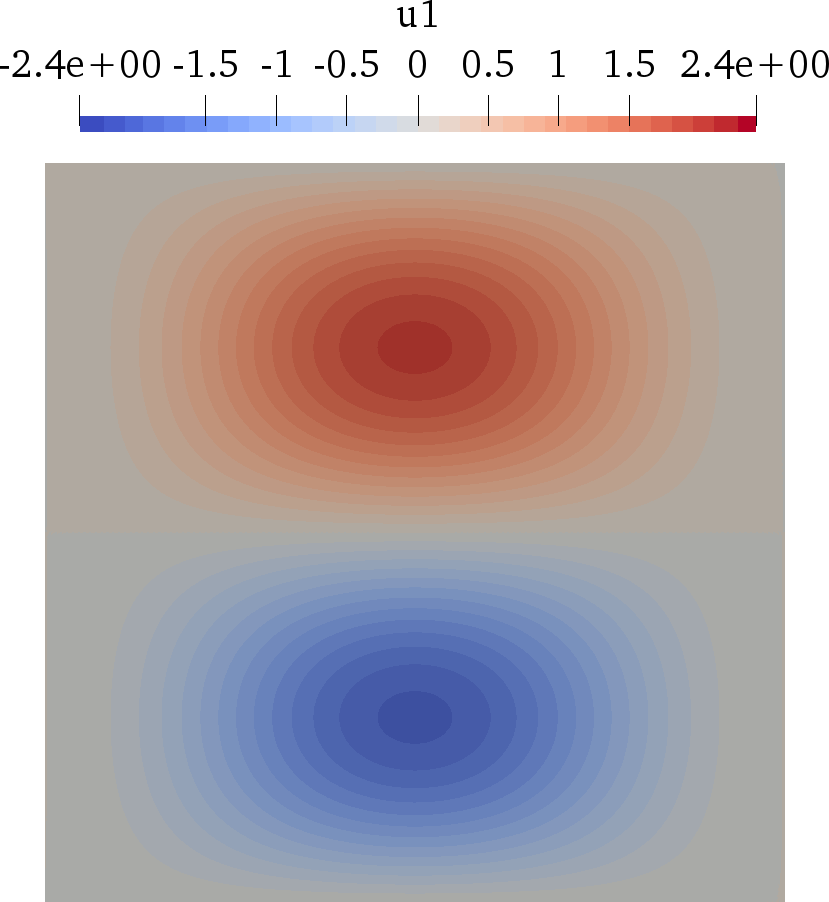}
    \subcaption{$\lambda=\pgfmathprintnumber{1e6}$, $u_1$}
  \end{minipage}
  \begin{minipage}{0.40\textwidth}\centering
    \includegraphics[width=5cm]{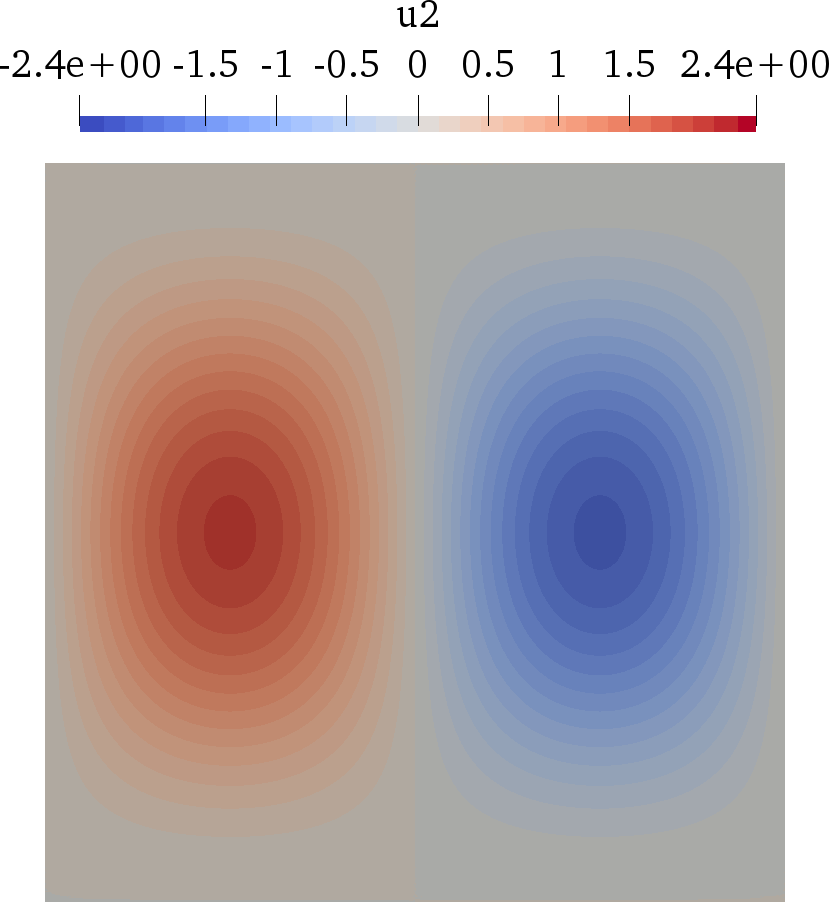}
    \subcaption{$\lambda=\pgfmathprintnumber{1e6}$, $u_2$}
  \end{minipage}
  \caption{Numerical solution for the test of Section \ref{sec:numerical.tests:brenner} on the $128\times128$ Cartesian orthogonal mesh.\label{fig:numerical.results:brenner:solution}}
\end{figure}

\begin{table}\centering
  \caption{Numerical results for the test of Section \ref{sec:numerical.tests:brenner}, structured triangular mesh family.\label{tab:numerical:results:brenner:tria}}
  \small
  \begin{tabular}{cccccc}
    \toprule
    $\Ndofs$
    & $\Nnz$
    & $\norm[\mathrm{a},h]{\uvec{u}_h-\vIh\vec{u}}$
    & EOC
    & $\norm{\vec{u}_h-\vlproj{0}\vec{u}}$
    & EOC   \\
    \midrule
    \multicolumn{6}{c}{$ (\mu,\lambda) = (\pgfmathprintnumber{1.00e+00},\pgfmathprintnumber{1.00e+00}) $ } \\ 
    \midrule
    144        & 3680       & 3.82e+00   & --         & 2.08e-01   & --         \\ 
    608        & 17856      & 1.96e+00   & 0.97       & 6.97e-02   & 1.58       \\ 
    2496       & 78080      & 9.64e-01   & 1.02       & 1.87e-02   & 1.90       \\ 
    10112      & 326016     & 4.84e-01   & 1.00       & 4.74e-03   & 1.98       \\ 
    40704      & 1331840    & 2.43e-01   & 1.00       & 1.19e-03   & 1.99       \\ 
    \midrule
    \multicolumn{6}{c}{$ (\mu,\lambda) = (\pgfmathprintnumber{1.00e+00},\pgfmathprintnumber{1.00e+03}) $ } \\ 
    \midrule
    144        & 3680       & 5.09e+00   & --         & 2.05e-01   & --         \\ 
    608        & 17856      & 1.95e+00   & 1.38       & 7.15e-02   & 1.52       \\ 
    2496       & 78080      & 9.15e-01   & 1.09       & 2.00e-02   & 1.84       \\ 
    10112      & 326016     & 4.52e-01   & 1.02       & 5.18e-03   & 1.95       \\ 
    40704      & 1331840    & 2.25e-01   & 1.00       & 1.31e-03   & 1.98       \\
    \midrule
    \multicolumn{6}{c}{$ (\mu,\lambda) = (\pgfmathprintnumber{1.00e+00},\pgfmathprintnumber{1.00e+06}) $ } \\ 
    \midrule
    144        & 3680       & 1.10e+02   & --         & 2.05e-01   & --         \\ 
    608        & 17856      & 1.48e+01   & 2.90       & 7.15e-02   & 1.52       \\ 
    2496       & 78080      & 2.07e+00   & 2.83       & 2.00e-02   & 1.84       \\ 
    10112      & 326016     & 5.08e-01   & 2.03       & 5.19e-03   & 1.95       \\ 
    40704      & 1331840    & 2.27e-01   & 1.16       & 1.31e-03   & 1.98       \\
    \bottomrule
  \end{tabular}
\end{table}

\begin{table}\centering
  \caption{Numerical results for the test of Section \ref{sec:numerical.tests:brenner}, unstructured triangular mesh family.\label{tab:numerical:results:brenner:distTria}}
  \small
  \begin{tabular}{cccccc}
    \toprule
    $\Ndofs$
    & $\Nnz$
    & $\norm[\mathrm{a},h]{\uvec{u}_h-\vIh\vec{u}}$
    & EOC
    & $\norm{\vec{u}_h-\vlproj{0}\vec{u}}$
    & EOC   \\
    \midrule
    \multicolumn{6}{c}{$ (\mu,\lambda) = (\pgfmathprintnumber{1.00e+00},\pgfmathprintnumber{1.00e+00}) $ } \\ 
    \midrule
    234        & 6572       & 3.00e+00   & --         & 1.38e-01   & --         \\ 
    978        & 30012      & 1.60e+00   & 0.90       & 4.11e-02   & 1.75       \\ 
    3986       & 127372     & 8.15e-01   & 0.98       & 9.37e-03   & 2.13       \\ 
    15542      & 505828     & 4.27e-01   & 0.93       & 2.61e-03   & 1.85       \\ 
    63584      & 2089920    & 2.12e-01   & 1.01       & 6.65e-04   & 1.97       \\ 
    249238     & 8228988    & 1.08e-01   & 0.97       & 1.71e-04   & 1.96       \\
    \midrule
    \multicolumn{6}{c}{$ (\mu,\lambda) = (\pgfmathprintnumber{1.00e+00},\pgfmathprintnumber{1.00e+03}) $ } \\ 
    \midrule
    234        & 6572       & 3.57e+00   & --         & 1.45e-01   & --         \\ 
    978        & 30012      & 1.60e+00   & 1.15       & 4.52e-02   & 1.68       \\ 
    3986       & 127372     & 8.00e-01   & 1.00       & 1.07e-02   & 2.07       \\ 
    15542      & 505828     & 4.18e-01   & 0.94       & 2.99e-03   & 1.85       \\ 
    63584      & 2089920    & 2.08e-01   & 1.01       & 7.63e-04   & 1.97       \\ 
    249238     & 8228988    & 1.06e-01   & 0.97       & 1.97e-04   & 1.96       \\
    \midrule
    \multicolumn{6}{c}{$ (\mu,\lambda) = (\pgfmathprintnumber{1.00e+00},\pgfmathprintnumber{1.00e+06}) $ } \\ 
    \midrule
    234        & 6572       & 6.17e+01   & --         & 1.45e-01   & --         \\ 
    978        & 30012      & 7.55e+00   & 3.03       & 4.52e-02   & 1.68       \\ 
    3986       & 127372     & 1.14e+00   & 2.72       & 1.07e-02   & 2.07       \\ 
    15542      & 505828     & 4.33e-01   & 1.40       & 2.99e-03   & 1.85       \\ 
    63584      & 2089920    & 2.08e-01   & 1.06       & 7.63e-04   & 1.97       \\ 
    249238     & 8228988    & 1.06e-01   & 0.98       & 1.97e-04   & 1.96       \\
    \bottomrule
  \end{tabular}
\end{table}

\begin{table}\centering
  \caption{Numerical results for the test of Section \ref{sec:numerical.tests:brenner}, Cartesian orthogonal mesh family.\label{tab:numerical:results:brenner:quad}}
  \small
  \begin{tabular}{cccccc}
    \toprule
    $\Ndofs$
    & $\Nnz$
    & $\norm[\mathrm{a},h]{\uvec{u}_h-\vIh\vec{u}}$
    & EOC
    & $\norm{\vec{u}_h-\vlproj{0}\vec{u}}$
    & EOC   \\
    \midrule
    \multicolumn{6}{c}{$ (\mu,\lambda) = (\pgfmathprintnumber{1.00e+00},\pgfmathprintnumber{1.00e+00}) $ } \\ 
    \midrule
    80         & 2768       & 3.13e+00   & --         & 1.55e-01   & --         \\ 
    352        & 15856      & 1.84e+00   & 0.77       & 4.08e-02   & 1.93       \\ 
    1472       & 73904      & 1.09e+00   & 0.75       & 1.04e-02   & 1.98       \\ 
    6016       & 317488     & 5.89e-01   & 0.89       & 2.89e-03   & 1.84       \\ 
    24320      & 1314608    & 3.02e-01   & 0.97       & 7.73e-04   & 1.90       \\
    \midrule
    \multicolumn{6}{c}{$ (\mu,\lambda) = (\pgfmathprintnumber{1.00e+00},\pgfmathprintnumber{1.00e+03}) $ } \\ 
    \midrule
    80         & 2768       & 3.08e+00   & --         & 1.64e-01   & --         \\ 
    352        & 15856      & 1.81e+00   & 0.77       & 4.72e-02   & 1.80       \\ 
    1472       & 73904      & 1.08e+00   & 0.75       & 1.37e-02   & 1.78       \\ 
    6016       & 317488     & 5.81e-01   & 0.89       & 3.96e-03   & 1.79       \\ 
    24320      & 1314608    & 2.97e-01   & 0.97       & 1.06e-03   & 1.90       \\
    \midrule
    \multicolumn{6}{c}{$ (\mu,\lambda) = (\pgfmathprintnumber{1.00e+00},\pgfmathprintnumber{1.00e+06}) $ } \\ 
    \midrule
    80         & 2768       & 3.08e+00   & --         & 1.64e-01   & --         \\ 
    352        & 15856      & 1.81e+00   & 0.77       & 4.72e-02   & 1.80       \\ 
    1472       & 73904      & 1.08e+00   & 0.75       & 1.37e-02   & 1.78       \\ 
    6016       & 317488     & 5.81e-01   & 0.89       & 3.96e-03   & 1.79       \\ 
    24320      & 1314608    & 2.97e-01   & 0.97       & 1.06e-03   & 1.90       \\
    \bottomrule
  \end{tabular}
\end{table}

\begin{table}\centering
  \caption{Numerical results for the test of Section \ref{sec:numerical.tests:brenner}, distorted quadrangular mesh family.\label{tab:numerical:results:brenner:distQuad}}
  \small
  \begin{tabular}{cccccc}
    \toprule
    $\Ndofs$
    & $\Nnz$
    & $\norm[\mathrm{a},h]{\uvec{u}_h-\vIh\vec{u}}$
    & EOC
    & $\norm{\vec{u}_h-\vlproj{0}\vec{u}}$
    & EOC   \\
    \midrule
    \multicolumn{6}{c}{$ (\mu,\lambda) = (\pgfmathprintnumber{1.00e+00},\pgfmathprintnumber{1.00e+00}) $ } \\ 
    \midrule
    80         & 2768       & 3.51e+00   & --         & 1.89e-01   & --         \\ 
    352        & 15856      & 1.91e+00   & 0.88       & 5.45e-02   & 1.79       \\ 
    1472       & 73904      & 1.08e+00   & 0.82       & 1.34e-02   & 2.03       \\ 
    6016       & 317488     & 5.83e-01   & 0.89       & 3.52e-03   & 1.93       \\ 
    24320      & 1314608    & 2.97e-01   & 0.97       & 9.18e-04   & 1.94       \\ 
    97792      & 5348656    & 1.49e-01   & 0.99       & 2.33e-04   & 1.98       \\
    \midrule
    \multicolumn{6}{c}{$ (\mu,\lambda) = (\pgfmathprintnumber{1.00e+00},\pgfmathprintnumber{1.00e+03}) $ } \\ 
    \midrule
    80         & 2768       & 3.44e+00   & --         & 1.96e-01   & --         \\ 
    352        & 15856      & 1.87e+00   & 0.88       & 5.89e-02   & 1.73       \\ 
    1472       & 73904      & 1.07e+00   & 0.81       & 1.63e-02   & 1.85       \\ 
    6016       & 317488     & 5.74e-01   & 0.89       & 4.48e-03   & 1.86       \\ 
    24320      & 1314608    & 2.92e-01   & 0.97       & 1.18e-03   & 1.93       \\ 
    97792      & 5348656    & 1.47e-01   & 0.99       & 3.00e-04   & 1.97       \\
    \midrule
    \multicolumn{6}{c}{$ (\mu,\lambda) = (\pgfmathprintnumber{1.00e+00},\pgfmathprintnumber{1.00e+06}) $ } \\ 
    \midrule
    80         & 2768       & 9.12e+00   & --         & 1.96e-01   & --         \\ 
    352        & 15856      & 2.27e+00   & 2.00       & 5.89e-02   & 1.73       \\ 
    1472       & 73904      & 1.08e+00   & 1.07       & 1.63e-02   & 1.85       \\ 
    6016       & 317488     & 5.74e-01   & 0.91       & 4.48e-03   & 1.86       \\ 
    24320      & 1314608    & 2.92e-01   & 0.97       & 1.18e-03   & 1.93       \\ 
    97792      & 5348656    & 1.47e-01   & 0.99       & 3.00e-04   & 1.97       \\
    \bottomrule
  \end{tabular}
\end{table}


\subsection{Two-dimensional singular case}\label{sec:numerical.tests:singular}

We next consider the solution of \cite[Section 5.1]{Ainsworth.Senior:97} which, in polar coordinates $(r,\theta)$, reads
$$
\renewcommand{\arraystretch}{1.2}
\vec{u}(r,\theta) = \begin{pmatrix}
  \frac{1}{2G} r^L\left[ (\kappa - Q(L+1))\cos(L \theta) - L \cos((L-2)\theta)\right]  
  \\
  \frac{1}{2G} r^L\left[ (\kappa + Q(L+1))\sin(L \theta) + L \sin((L-2)\theta)\right]
\end{pmatrix},
$$
where the various parameters take the following numerical values: $\mu=\pgfmathprintnumber{0.65}$, %
$\lambda=\pgfmathprintnumber{0.975}$, %
$G=\frac{5}{13}$, %
$\kappa=\frac{9}{5}$, %
$L=\pgfmathprintnumber[precision=14]{0.5444837367825}$, %
$Q=\pgfmathprintnumber[precision=14]{0.5430755788367}$.
The forcing term in this case is equal to zero, while the Dirichlet boundary condition is inferred from the exact solution.
\begin{figure}\centering
    \begin{tikzpicture}[scale=2]
    \draw[->] (-2.1,0) -- (2.1,0) node[below]{$x$};
    \draw[->] (0,-2.1) -- (0,2.1) node[left]{$y$};
    \draw[very thick] (1.414213562373,0) node[above,anchor=north west] {$(\sqrt{2},0)$} %
    -- (0,1.414213562373) node[right,anchor=south west] {$(0,\sqrt{2})$} %
    -- (-0.707106781186548,0.707106781186548) node[left] {$(-\frac{\sqrt{2}}2,\frac{\sqrt{2}}2)$} %
    -- (0,0) node[right,anchor=north west] {$(0,0)$} %
    -- (-0.707106781186548,-0.707106781186548) node[left] {$(-\frac{\sqrt{2}}2,-\frac{\sqrt{2}}2)$} 
    -- (0,-1.414213562373) node[right,anchor=north west] {$(0,-\sqrt{2})$} %
    -- (1.414213562373,0);
  \end{tikzpicture}
  \caption{Domain for the test case of Section \ref{sec:numerical.tests:singular}.\label{fig:numerical.tests:singular:domain}}
  \begin{minipage}{0.45\textwidth}\centering
    \includegraphics[width=0.85\textwidth]{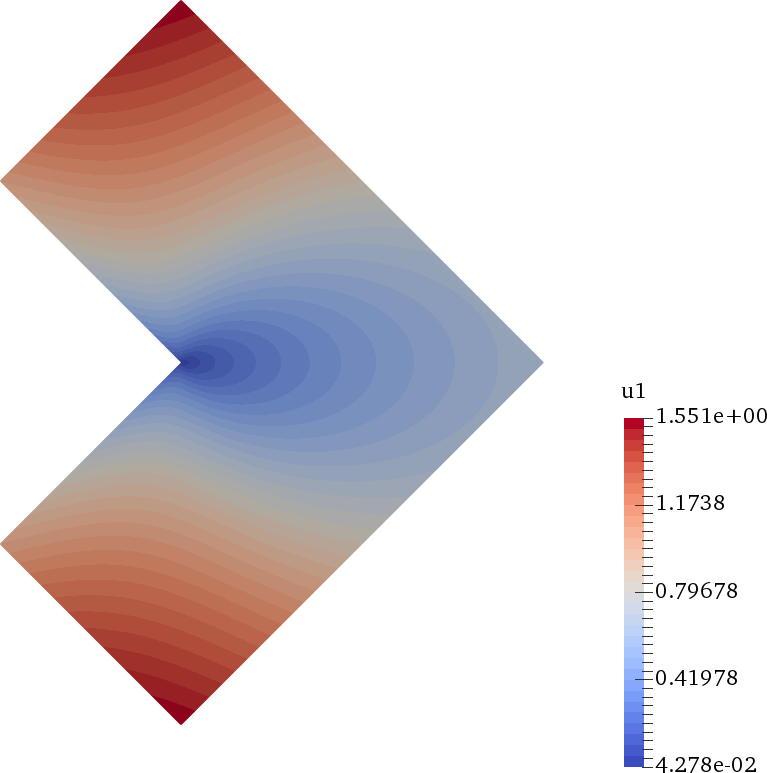}
    \subcaption{$u_1$}
  \end{minipage}
  \begin{minipage}{0.45\textwidth}\centering
    \includegraphics[width=0.85\textwidth]{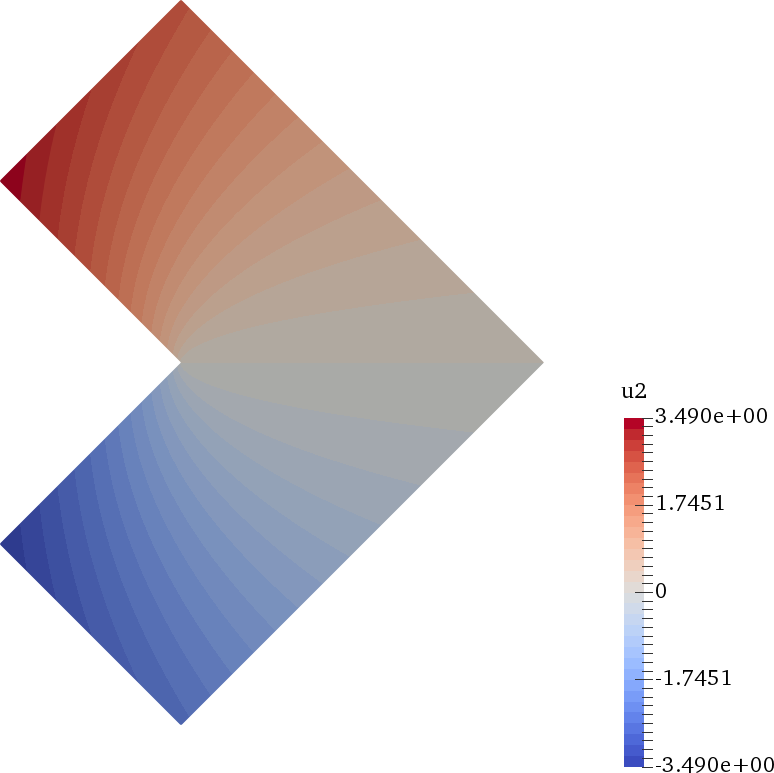}
    \subcaption{$u_2$}
  \end{minipage}
  \caption{Numerical solution for the test of Section \ref{sec:numerical.tests:singular}.\label{fig:numerical.tests:singular:solution}}
\end{figure}
The domain $\Omega$ is illustrated in Figure \ref{fig:numerical.tests:singular:domain}, while the solution on the finest computational mesh considered here is depicted in Figure \ref{fig:numerical.tests:singular:solution}.
This test case is representative of real-life situations corresponding to a mode 1 fracture in a plain strain problem.
The solution exhibits a singularity in the origin, which prevents the method from attaining the full orders of convergence predicted for smooth solutions.

For the numerical resolution, we consider a sequence of refined structured quadrangular meshes.
The numerical results collected in the top half of Table \ref{tab:numerical.tests:singular} show an asymptotic EOC in the energy-norm of about $0.54$, while the asymptotic EOC in the $L^2$-norm is about $1.31$.
For the sake of completeness, we show, in the bottom half of Table \ref{tab:numerical.tests:singular}, a comparison with the original HHO method of \cite{Di-Pietro.Ern:15} with $k=1$.
Also in this case, the EOC are limited by the regularity of the solution, and coincide with those observed for the method studied in this work.
As expected, the number of unknowns on a given mesh is larger for the method of \cite{Di-Pietro.Ern:15} compared to the method proposed here, despite the fact that static condensation is applied in the former case.
It has to be noticed, however, that the reduction in the number of unknowns is balanced by the increased number of nonzero entries in the matrix, due to both the absence of static condensation (see Remark \ref{rem:static.condensation}) and the presence of the jump penalisation term.
This phenomenon is specific to the two-dimensional case: in dimension $d=3$, the matrix corresponding to the method of \cite{Di-Pietro.Ern:15} with $k=1$ is generally more dense; see, e.g., Table \ref{tab:numerical.tests:3d}.
The errors in the energy norm appear to be smaller for the HHO method of \cite{Di-Pietro.Ern:15}, but this is in part due to the fact that the natural energy norm associated with the corresponding bilinear form does not contain the norm of the jumps.
\begin{table}\centering
  \caption{Numerical results for the test of Section \ref{sec:numerical.tests:singular} and comparison with the high-order method of \cite{Di-Pietro.Ern:15} with $k=1$.
    For the latter, the energy norm is the one associated to the corresponding bilinear form without jump stabilisation.\label{tab:numerical.tests:singular}}
  \small
  \begin{tabular}{cccccc}
    \toprule
    $\Ndofs$
    & $\Nnz$
    & $\norm[\mathrm{a},h]{\uvec{u}_h-\vIh\vec{u}}$
    & EOC
    & $\norm{\vec{u}_h-\vlproj{0}\vec{u}}$
    & EOC   \\
    \midrule
    \multicolumn{6}{c}{Present work}- \\ 
    \midrule
    256        & 10616      & 7.65e-01   & --         & 7.51e-02   & --         \\ 
    1088       & 52728      & 5.63e-01   & 0.44       & 3.34e-02   & 1.17       \\ 
    4480       & 232568     & 3.97e-01   & 0.50       & 1.40e-02   & 1.25       \\ 
    18176      & 974712     & 2.76e-01   & 0.53       & 5.72e-03   & 1.29       \\ 
    73216      & 3988856    & 1.90e-01   & 0.54       & 2.31e-03   & 1.31       \\ 
    293888     & 16136568   & 1.31e-01   & 0.54       & 9.29e-04   & 1.31       \\
    \midrule
    \multicolumn{6}{c}{HHO method of \cite{Di-Pietro.Ern:15}, $k=1$} \\
    \midrule
    320        & 7584       & 1.07e-01   & --         & 9.40e-03   & --         \\ 
    1408       & 36512      & 7.32e-02   & 0.55       & 3.64e-03   & 1.37       \\ 
    5888       & 158880     & 5.01e-02   & 0.55       & 1.41e-03   & 1.36       \\ 
    24064      & 661664     & 3.43e-02   & 0.55       & 5.52e-04   & 1.36       \\ 
    97280      & 2699424    & 2.35e-02   & 0.54       & 2.17e-04   & 1.35       \\ 
    391168     & 10903712   & 1.61e-02   & 0.54       & 8.57e-05   & 1.34       \\
    \bottomrule
  \end{tabular}
\end{table}


\subsection{Three-dimensional compressible case}\label{sec:numerical.tests:3d}

To test the performance of the method in three space dimensions, we solve on the unit cube domain $\Omega=(0,1)^3$ the homogeneous Dirichlet problem corresponding to the exact solution $\vec{u}=(u_i)_{1\le i\le d}$ such that
$$
u_i(\vec{x})=\sin(\pi x_1)\sin(\pi x_2)\sin(\pi x_3)\qquad\forall 1\le i\le 3.
$$
The corresponding forcing term is
$$
\renewcommand{\arraystretch}{1.2}
\begin{aligned}
  \vec{f}(\vec{x})
  &=\mu\begin{pmatrix}
  2\sin(\pi x_1)\sin(\pi x_2)\sin(\pi x_3) - \sin(\pi x_2)\cos(\pi(x_3+x_1)) - \sin(\pi x_3)\cos(\pi(x_1+x_2))
  \\
  2\sin(\pi x_1)\sin(\pi x_2)\sin(\pi x_3) - \sin(\pi x_3)\cos(\pi(x_1+x_2)) - \sin(\pi x_1)\cos(\pi(x_2+x_3))
  \\
  2\sin(\pi x_1)\sin(\pi x_2)\sin(\pi x_3) - \sin(\pi x_1)\cos(\pi(x_2+x_3)) - \sin(\pi x_2)\cos(\pi(x_3+x_1))
  \end{pmatrix}  
  \\
  &\quad
  + \lambda\begin{pmatrix}
  \sin(\pi x_1)\sin(\pi x_2)\sin(\pi x_3) - \cos(\pi x_1)\sin(\pi(x_2+x_3))
  \\
  \sin(\pi x_1)\sin(\pi x_2)\sin(\pi x_3) - \cos(\pi x_2)\sin(\pi(x_3+x_1))
  \\
  \sin(\pi x_1)\sin(\pi x_2)\sin(\pi x_3) - \cos(\pi x_3)\sin(\pi(x_1+x_2))
  \end{pmatrix}.
\end{aligned}
$$
For the numerical solution, we take $\mu=\lambda=1$.
Table \ref{tab:numerical.tests:3d} collects the numerical results on Cartesian orthogonal and unstructured simplicial 
mesh families.
The monitored quantities are the same as for the other test cases to which we add, for the sake of comparison, the number of unknowns and of nonzero matrix entries for the method of \cite{Di-Pietro.Ern:15} with $k=1$.
For both mesh families, the asymptotic EOC for both the energy- and the $L^2$-norms of the error agree with the ones predicted.
Specifically, on the simplicial mesh family an EOC close to 1 is attained starting from the third mesh refinement in the energy norm, whereas an EOC close to 2 is already observed starting from the second mesh refinement; on the Cartesian orthogonal mesh family, on the other hand, the orders of convergence take longer to settle to the corresponding asymptotic values, likely because the first computational meshes are very coarse.

\begin{table}\centering
  \caption{Numerical results for the test of Section \ref{sec:numerical.tests:3d}.
    The number of degrees of freedom and of nonzero matrix entries for the method of \cite{Di-Pietro.Ern:15} are also included for comparison (except for the last mesh refinement).
    \label{tab:numerical.tests:3d}}
  \small
  \begin{tabular}{cccccccc}
    \toprule
    \multicolumn{2}{c}{$\Ndofs$}
    & \multicolumn{2}{c}{$\Nnz$}
    & \multirow{2}{*}{$\norm[\mathrm{a},h]{\uvec{u}_h-\vIh\vec{u}}$}
    & \multirow{2}{*}{EOC}
    & \multirow{2}{*}{$\norm{\vec{u}_h-\vlproj{0}\vec{u}}$}
    & \multirow{2}{*}{EOC}
    \\
    $k=0$ & ($k=1$) & $k=0$ & ($k=1$) \\
    \midrule
    \multicolumn{8}{c}{Cartesian orthogonal mesh sequence} \\
    \midrule
    60      & (108)    & 2772      & (4860)     & 2.42e+00   & --         & 1.76e-01   & --         \\ 
    624     & (1296)   & 70128     & (97200)    & 2.07e+00   & 0.23       & 1.01e-01   & 0.81       \\ 
    5568    & (12096)  & 831024    & (1057536)  & 1.31e+00   & 0.65       & 4.09e-02   & 1.30       \\ 
    46848   & (103680) & 7879824   & (9673344)  & 7.19e-01   & 0.87       & 1.27e-02   & 1.68       \\ 
    384000  & (857088) & 68277456  & (82425600) & 3.71e-01   & 0.95       & 3.46e-03   & 1.88       \\
    3108864 & --       & 567808848 & --         & 1.87e-01   & 0.98       & 8.95e-04   & 1.95       \\
    \midrule
    \multicolumn{8}{c}{Unstructured simplicial mesh sequence} \\ 
    \midrule
    1584    & (3024)    & 107136    & (167184)    & 1.38e+00   & --         & 4.70e-02   & --         \\ 
    13248   & (25920)   & 1008288   & (1539648)   & 7.61e-01   & 0.85       & 1.64e-02   & 1.52       \\ 
    108288  & (214272)  & 8676288   & (13125888)  & 3.96e-01   & 0.94       & 4.39e-03   & 1.91       \\ 
    875520  & (1741824) & 71860608  & (108241920) & 2.02e-01   & 0.97       & 1.14e-03   & 1.95       \\
    7041024 & --        & 584706816 & --          & 1.02e-01  & 0.99       & 2.89e-04   & 1.98       \\
    \bottomrule
  \end{tabular}
\end{table}


\section{Local balances and continuity of numerical tractions}\label{sec:flux}

In this section we show that our method satisfies local force balances with equilibrated face tractions.
This property can be exploited, e.g., to derive a posteriori error estimates by flux equilibration, and it makes the proposed method suitable for integration into existing Finite Volume codes.

\begin{lemma}[Traction formulation of the discrete bilinear form]\label{lem:ah:flux}
  We have the following reformulation of the discrete bilinear form $\mathrm{a}_h$ defined by \eqref{eq:ah}:
  For all $\uvec{w}_h,\uvec{v}_h\in\vUhD$,
  \begin{equation}\label{eq:ah:flux}
    \mathrm{a}_h(\uvec{w}_h,\uvec{v}_h)
    = \sum_{T\in\Th}\sum_{F\in\Fh[T]}\meas{F}\vec{\Phi}_{TF}(\uvec{w}_h)\SCAL(\vec{v}_T-\vec{v}_F),
  \end{equation}
  where, for all $T\in\Th$ and all $F\in\Fh[T]$, we have introduced the numerical traction $\vec{\Phi}_{TF}:\vUT\to\Poly{0}(F;\Real^d)$ such that
  $$
  \vec{\Phi}_{TF}(\uvec{w}_h)
  \coloneq -\tens{\sigma}(\GRADs\vpT\uvec{w}_T)\normal_{TF}
  + (2\mu)~\vec{\Phi}_{\mathrm{j},TF}(\uvec{w}_h)
  + (2\mu)~\vec{\Phi}_{\mathrm{s},TF}(\uvec{w}_T),
  $$
  with jump penalisation and stabilisation contributions respectively defined as
  $$
  \begin{aligned}
    \vec{\Phi}_{\mathrm{j},TF}(\uvec{w}_h)
    &\coloneq
    \frac{\epsilon_{TF}}{h_F\meas{F}}\int_F\jump{\vph\uvec{w}_h}
    + \sum_{G\in\Fh[T]}
    \frac{\epsilon_{TG}}{h_G\meas{T}}
    (\overline{\vec{x}}_G-\overline{\vec{x}}_T)\SCAL\normal_{TG}
    \int_G\jump[G]{\vph\uvec{w}_h},
    \\
    \vec{\Phi}_{\mathrm{s},TF}(\uvec{w}_T)
    &\coloneq
    \frac{1}{h_F}\vec{\delta}_{TF}\uvec{w}_T
    + \sum_{G\in\Fh[T]}\frac{\meas{G}}{h_G\meas{T}}(\overline{\vec{x}}_T-\overline{\vec{x}}_G)\SCAL\normal_{TF}~\vec{\delta}_{TG}\uvec{w}_T,
  \end{aligned}
  $$
  where, for any $X$ mesh element or face, we have denoted by $\overline{\vec{x}}_X\coloneq\frac{1}{\meas{X}}\int_X\vec{x}$ its centroid and, for any $T\in\Th$ and any $F\in\Fh[T]$, $\epsilon_{TF}\coloneq\normal_{TF}\SCAL\normal_F$ defines the orientation of $F$ relative to $T$.
\end{lemma}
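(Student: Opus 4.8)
The plan is to fix $\uvec{w}_h$ and rewrite $\mathrm{a}_h(\uvec{w}_h,\uvec{v}_h)$ as a linear functional of $\uvec{v}_h$ whose dependence is carried entirely by the differences $\vec{v}_T-\vec{v}_F$. Such a representation is available because each of the three contributions in \eqref{eq:ah} is invariant under adding a global constant vector $\vec{c}$ to every element and face unknown: the reconstruction satisfies $\vpT(\uvec{v}_T+\vec{c})=\vpT\uvec{v}_T+\vec{c}$, so $\GRADs\vpT$, the jumps $\jump{\vph\uvec{v}_h}$, and the boundary differences $\vec{\delta}_{TF}$ are all unchanged. Since the elementary functionals $\uvec{v}_h\mapsto\vec{v}_T-\vec{v}_F$ do not form a basis, I would not rely on abstract existence but verify directly that the tractions in the statement reproduce each term; I treat the three terms of \eqref{eq:ah} separately and add the resulting face contributions at the end.

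For the consistency term I would use that, by \eqref{eq:vpT}, $\GRAD\vpT\uvec{v}_T$ is constant on $T$ and equal to $\sum_{F\in\Fh[T]}\tfrac{\meas{F}}{\meas{T}}(\vec{v}_F-\vec{v}_T)\otimes\normal_{TF}$. Writing the global integral as $\sum_{T\in\Th}(\tens{\sigma}(\GRADs\vpT\uvec{w}_T),\GRADs\vpT\uvec{v}_T)_T$, the integrand is constant, so the term equals $\sum_{T\in\Th}\meas{T}\,\tens{\sigma}(\GRADs\vpT\uvec{w}_T)\SSCAL\GRADs\vpT\uvec{v}_T$. The symmetry of $\tens{\sigma}(\GRADs\vpT\uvec{w}_T)$ lets me replace $\GRADs\vpT\uvec{v}_T$ by $\GRAD\vpT\uvec{v}_T$ (its skew part contracts to zero), and the identity $\tens{\tau}\SSCAL(\vec{a}\otimes\vec{b})=(\tens{\tau}\vec{b})\SCAL\vec{a}$ then yields $\sum_{T\in\Th}\sum_{F\in\Fh[T]}\meas{F}\big(\tens{\sigma}(\GRADs\vpT\uvec{w}_T)\normal_{TF}\big)\SCAL(\vec{v}_F-\vec{v}_T)$, i.e. exactly the $-\tens{\sigma}(\GRADs\vpT\uvec{w}_T)\normal_{TF}$ contribution to $\vec{\Phi}_{TF}$.

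For the stabilisation term I would expand $\vec{\delta}_{TF}\uvec{v}_T=\vlproj[F]{0}\vpT\uvec{v}_T-\vec{v}_F$ using the explicit form \eqref{eq:vpT:bis}. The crucial simplification is that $F$ is planar, so $(\vec{x}-\overline{\vec{x}}_F)\SCAL\normal_{TF}=0$ for $\vec{x}\in F$; consequently $\vlproj[F]{0}$ turns $(\vec{x}-\overline{\vec{x}}_T)\SCAL\normal_{TG}$ into $(\overline{\vec{x}}_F-\overline{\vec{x}}_T)\SCAL\normal_{TG}$, giving $\vec{\delta}_{TF}\uvec{v}_T=(\vec{v}_T-\vec{v}_F)+\sum_{G\in\Fh[T]}\tfrac{\meas{G}}{\meas{T}}\big((\overline{\vec{x}}_F-\overline{\vec{x}}_T)\SCAL\normal_{TG}\big)(\vec{v}_G-\vec{v}_T)$. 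Substituting into $\mathrm{s}_h$ splits it into a diagonal part, already of the form $\tfrac{1}{h_F}\vec{\delta}_{TF}\uvec{w}_T\SCAL(\vec{v}_T-\vec{v}_F)$, and an element-internal coupling part; relabelling the two face indices $F\leftrightarrow G$ within each element and using $(\overline{\vec{x}}_F-\overline{\vec{x}}_T)=-(\overline{\vec{x}}_T-\overline{\vec{x}}_F)$ recollects the coupling part as a multiple of $(\vec{v}_T-\vec{v}_F)$, producing precisely the second term of $\vec{\Phi}_{\mathrm{s},TF}$.

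The jump term is handled in the same spirit but is where I expect the main obstacle to lie. I would first use $\epsilon_{TF}=\normal_{TF}\SCAL\normal_F$ to write $\jump{\vph\uvec{v}_h}|_F=\sum_{T\in\Th[F]}\epsilon_{TF}(\vpT\uvec{v}_T)|_F$, insert this on the $\uvec{v}$-side only, and exchange $\sum_{F\in\Fh}\sum_{T\in\Th[F]}$ for $\sum_{T\in\Th}\sum_{F\in\Fh[T]}$ via \eqref{eq:sumTF=sumFT}. Substituting \eqref{eq:vpT:bis} for $(\vpT\uvec{v}_T)|_F$, the $G=F$ contribution carries the factor $(\vec{x}-\overline{\vec{x}}_T)\SCAL\normal_{TF}$, which planarity again renders constant on $F$, so that it involves only the face-mean $\int_F\jump{\vph\uvec{w}_h}$ entering $\vec{\Phi}_{\mathrm{j},TF}$; the off-diagonal $G\neq F$ contributions must then be reorganised, through the $F\leftrightarrow G$ swap used for $\mathrm{s}_h$ together with the planarity identity, until only face-mean jumps survive, reproducing the coupling sum in $\vec{\Phi}_{\mathrm{j},TF}$. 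I expect this reduction of the face pairings to their means --- verifying that no genuine first-moment (tangential) contribution of the jumps persists --- to be the crux of the argument, alongside the consistent tracking of the orientation factors $\epsilon_{TF}$ so that element traces reassemble into jumps with the correct sign, and the bookkeeping of the triple summation over $T$ and its faces. Once these are settled, collecting the coefficient of each $(\vec{v}_T-\vec{v}_F)$ from the three terms gives \eqref{eq:ah:flux}.
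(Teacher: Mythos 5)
Your overall strategy coincides with the paper's: treat the three contributions to $\mathrm{a}_h$ separately, exploit the constancy of $\GRADs\vpT\uvec{w}_T$ together with its symmetry for the consistency term, and expand the reconstruction via \eqref{eq:vpT:bis} followed by an $F\leftrightarrow G$ relabelling within each element for the other two. Your treatment of the consistency and stabilisation terms is complete and matches the paper's computation step by step (for the stabilisation term, note that replacing $(\vec{x}-\overline{\vec{x}}_T)\SCAL\normal_{TG}$ by its face average under $\vlproj[F]{0}$ only uses linearity of the mean, so planarity is not even needed there).

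The gap is in the jump term, and it sits exactly where you place it. After reducing $\mathrm{j}_h$ to $\sum_{T\in\Th}\sum_{F\in\Fh[T]}\frac{\epsilon_{TF}}{h_F}\left(\jump{\vph\uvec{w}_h},\vpT\uvec{v}_T\right)_F$ and expanding $\vpT\uvec{v}_T$ on $F$, the coefficient of $(\vec{v}_G-\vec{v}_T)$ involves $\int_F\jump{\vph\uvec{w}_h}\,(\vec{x}-\overline{\vec{x}}_T)\SCAL\normal_{TG}$. For $G=F$ the weight is constant on $F$ by planarity and only the face mean of the jump appears, as you observe. For $G\neq F$, however, $(\vec{x}-\overline{\vec{x}}_T)\SCAL\normal_{TG}$ is a non-constant affine function on $F$ (constant only when $F$ and $G$ are parallel), and after the $F\leftrightarrow G$ swap the integral over $G$ carries a weight normal to a \emph{different} face, again non-constant on the face of integration. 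Since $\jump{\vph\uvec{w}_h}$ is itself genuinely affine on each face, its first moments do not drop out for free, so planarity alone does not deliver the reduction to face means that the stated $\vec{\Phi}_{\mathrm{j},TF}$ requires. Writing that the off-diagonal contributions ``must then be reorganised until only face-mean jumps survive'' and that you ``expect'' this to be the crux is an acknowledgement that the decisive computation has not been carried out; as written, the proposal does not establish the stated expression for $\vec{\Phi}_{\mathrm{j},TF}$, whereas the paper's proof performs this manipulation explicitly (applying the centroid identity and the index swap with a definite bookkeeping of which normal goes with which face). You also leave implicit the insertion of $\vec{v}_F$ into the zeroth-order term $\left(\jump{\vph\uvec{w}_h},\vec{v}_T\right)_F$, which uses the single-valuedness of the jump at interfaces and $\vec{v}_F=\vec{0}$ on boundary faces; that part is minor.
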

\begin{proof}
  We proceed to reformulate the three terms in the right-hand side of \eqref{eq:ah} in order to highlight the corresponding contribution to the numerical traction.
  For the consistency term, we can write
  $$
  \begin{aligned}
    (\tens{\sigma}(\GRADsh\vph\uvec{w}_h),\GRADsh\vph\uvec{v}_h)
    &=
    \sum_{T\in\Th}\meas{T}\tens{\sigma}(\GRADs\vpT\uvec{w}_T)\SSCAL\GRADs\vpT\uvec{v}_T  
    =
    \sum_{T\in\Th}\meas{T}\tens{\sigma}(\GRADs\vpT\uvec{w}_T)\SSCAL\GRAD\vpT\uvec{v}_T
    \\
    &=
    \sum_{T\in\Th}\meas{T}\tens{\sigma}(\GRADs\vpT\uvec{w}_T)\SSCAL\left(
    \sum_{F\in\Fh[T]}\frac{\meas{F}}{\meas{T}}(\vec{v}_F-\vec{v}_T)\otimes\normal_{TF}
    \right)
    \\
    &=
    -\sum_{T\in\Th}\sum_{F\in\Fh[T]}
    \meas{F}\tens{\sigma}(\GRADs\vpT\uvec{w}_T)\normal_{TF}\SCAL (\vec{v}_T-\vec{v}_F),
  \end{aligned}
  $$
  where we have used the fact that, for any $T\in\Th$, both $\tens{\sigma}(\GRADs\vpT\uvec{w}_T)$ and $\GRADs\vpT\uvec{v}_T$ are constant inside $T$ along with the fact that $\tens{\sigma}(\GRADs\vpT\uvec{w}_T)$ is symmetric to replace $\GRADs$ with $\GRAD$ in the first line,
  the first relation in \eqref{eq:vpT} to pass to the second line,
  and we have rearranged the products and sums to conclude.

  For the jump penalisation term, we can start by observing that
  $$
  \begin{aligned}
    \mathrm{j}_h(\uvec{w}_h,\uvec{v}_h)
    &=
    \sum_{F\in\Fh}\frac{1}{h_F}\left(\jump{\vph\uvec{w}_h},\jump{\vph\uvec{v}_h}\right)_F
    \\
    &=
    \sum_{F\in\Fh}\sum_{T\in\Th[F]}\frac{\epsilon_{TF}}{h_F}\left(\jump{\vph\uvec{w}_h},\vpT\uvec{v}_T\right)_F
    =
    \sum_{T\in\Th}\sum_{F\in\Fh[T]}\frac{\epsilon_{TF}}{h_F}\left(\jump{\vph\uvec{w}_h},\vpT\uvec{v}_T\right)_F,
  \end{aligned}
  $$
  where we have used the definition of the jump operator to pass to the second line and exchanged the sums over elements and faces according to \eqref{eq:sumTF=sumFT} to conclude.
  Using the explicit expression \eqref{eq:vpT:bis} of the local displacement reconstruction, we can go on writing
  $$
  \begin{aligned}
    \mathrm{j}_h(\uvec{w}_h,\uvec{v}_h)
    &=
    \sum_{T\in\Th}\sum_{F\in\Fh[T]}\frac{\epsilon_{TF}}{h_F}\left(
    \jump{\vph\uvec{w}_h},\vec{v}_T + \sum_{G\in\Fh[T]}\frac{\meas{G}}{\meas{T}}(\vec{x}-\overline{\vec{x}}_T)\SCAL\normal_{TF}(\vec{v}_G-\vec{v}_T)
    \right)_F
    \\
    &=
    \sum_{T\in\Th}\sum_{F\in\Fh[T]}\frac{\epsilon_{TF}}{h_F}\left(\jump{\vph\uvec{w}_h},\vec{v}_T-\vec{v}_F\right)_F
    \\
    &\qquad
    + \sum_{T\in\Th}\sum_{F\in\Fh[T]}\sum_{G\in\Fh[T]}\frac{\epsilon_{TF}\meas{G}}{h_F\meas{T}}\left(
    \jump{\vph\uvec{w}_h}(\vec{x}-\overline{\vec{x}}_T)\SCAL\normal_{TF},\vec{v}_G-\vec{v}_T
    \right)_F
    \\
    &=
    \sum_{T\in\Th}\sum_{F\in\Fh[T]}\meas{F}\left(\frac{\epsilon_{TF}}{h_F\meas{F}}\int_F\jump{\vph\uvec{w}_h}\right)\SCAL(\vec{v}_T-\vec{v}_F)
    \\
    &\qquad
    - \sum_{T\in\Th}\sum_{G\in\Fh[T]}\meas{G}\left(
    \sum_{F\in\Fh[T]}\frac{\epsilon_{TF}}{h_F\meas{T}}\int_F\jump{\vph\uvec{w}_h}(\vec{x}-\overline{\vec{x}}_T)\SCAL\normal_{TF}
    \right)\SCAL(\vec{v}_T-\vec{v}_G)
    \\
    &=
    \sum_{T\in\Th}\sum_{F\in\Fh[T]}\meas{F}\vec{\Phi}_{\mathrm{j},TF}(\uvec{w}_h)\SCAL(\vec{v}_T-\vec{v}_F),
  \end{aligned}
  $$
  where, to insert $\vec{v}_F$ into the first term in the second line, we have used the fact that $\jump{\vph\uvec{w}_h}$ is single-valued at interfaces together with $\vec{v}_F=\vec{0}$ on boundary faces,
  to pass to the third line we have used the fact that the discrete unknowns in $\uvec{v}_h$ are constant over mesh elements to take them out of the integrals over faces
  while, to conclude, we have observed that $(\vec{x}-\overline{\vec{x}}_T)\SCAL\normal_{TF}=(\overline{\vec{x}}_F-\overline{\vec{x}}_T)\SCAL\normal_{TF}$ for all $\vec{x}\in F$ and we have used the definition of $\vec{\Phi}_{\mathrm{j},TF}(\uvec{w}_h)$ after switching the names of the mute variables $F$ and $G$ in the second term of the third line.

  Moving to the stabilisation term, we can write
  $$
  \begin{aligned}
    \mathrm{s}_h(\uvec{w}_h,\uvec{v}_h)
    &=
    \sum_{T\in\Th}\sum_{F\in\Fh[T]}\frac{\meas{F}}{h_F}\vec{\delta}_{TF}\uvec{w}_T\SCAL\vec{\delta}_{TF}\uvec{v}_T
    \\
    &=
    \sum_{T\in\Th}\sum_{F\in\Fh[T]}\frac{\meas{F}}{h_F}\vec{\delta}_{TF}\uvec{w}_T\SCAL\left(
    \vec{v}_T - \vec{v}_F + \sum_{G\in\Fh[T]}\frac{\meas{G}}{\meas{T}}(\vec{v}_G-\vec{v}_T)(\overline{\vec{x}}_F-\overline{\vec{x}}_T)\SCAL\normal_{TG}
    \right)
    \\
    &=
    \sum_{T\in\Th}\sum_{F\in\Fh[T]}\frac{\meas{F}}{h_F}\vec{\delta}_{TF}\uvec{w}_T\SCAL(\vec{v}_T - \vec{v}_F)
    \\
    &\qquad
    + \sum_{T\in\Th}\sum_{G\in\Fh[T]}\meas{G}\left(
    \sum_{F\in\Fh[T]}\frac{\meas{F}}{h_F\meas{T}}(\overline{\vec{x}}_T-\overline{\vec{x}}_F)\SCAL\normal_{TG}~\vec{\delta}_{TF}\uvec{w}_T
    \right)\SCAL(\vec{v}_T-\vec{v}_G)
    \\
    &=
    \sum_{T\in\Th}\sum_{F\in\Fh[T]}\meas{F}\vec{\Phi}_{\mathrm{s},TF}(\uvec{w}_T)
    \SCAL(\vec{v}_T-\vec{v}_F),
  \end{aligned}
  $$
  where we have used the definition \eqref{eq:dTF} of the boundary difference operator together with the explicit expression \eqref{eq:vpT:bis} of the local displacement reconstruction to pass to the second line,
  we have rearranged the terms to pass to the third line,
  and we have used the definition of $\vec{\Phi}_{\mathrm{s},TF}(\uvec{w}_T)$ after switching the names of the mute variables $F$ and $G$ in the second term of the third line to conclude.
\end{proof}

\begin{corollary}[Local balances and equilibrated tractions]
  Under the assumptions and notations of Lemma \ref{lem:ah:flux}, we have that $\uvec{u}_h\in\vUhD$ solves the discrete problem \eqref{eq:discrete} if and only if:
  For all $T\in\Th$ the following balance holds
  \begin{equation}\label{eq:balance}
    \sum_{F\in\Fh[T]}\meas{F}\vec{\Phi}_{TF}(\uvec{u}_h) = \int_T\vec{f},
  \end{equation}
  and, for any interface $F\in\Fhi$ shared by the mesh elements $T_1$ and $T_2$, it holds that
  \begin{equation}\label{eq:equilibrated.tractions}
    \vec{\Phi}_{T_1F}(\uvec{u}_h) + \vec{\Phi}_{T_2F}(\uvec{u}_h) = \vec{0}.
  \end{equation}
\end{corollary}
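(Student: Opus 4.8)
The plan is to exploit the traction reformulation \eqref{eq:ah:flux} of $\mathrm{a}_h$ established in Lemma \ref{lem:ah:flux}, combined with a suitable choice of test functions spanning the canonical degrees of freedom of $\vUhD$. I first note that, since $\vec{v}_h$ is piecewise constant with $(\vec{v}_h)_{|T}=\vec{v}_T$ by \eqref{eq:vh}, the right-hand side of \eqref{eq:discrete} reads $(\vec{f},\vec{v}_h)=\sum_{T\in\Th}\int_T\vec{f}\SCAL\vec{v}_T$. Using this together with \eqref{eq:ah:flux}, the discrete problem \eqref{eq:discrete} is equivalent to requiring
$$
\sum_{T\in\Th}\sum_{F\in\Fh[T]}\meas{F}\vec{\Phi}_{TF}(\uvec{u}_h)\SCAL(\vec{v}_T-\vec{v}_F)
=\sum_{T\in\Th}\int_T\vec{f}\SCAL\vec{v}_T
\qquad\forall\uvec{v}_h\in\vUhD.
$$

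To prove the forward implication, I would test this identity against carefully chosen elements of $\vUhD$. For the local balance \eqref{eq:balance}, I fix $T_0\in\Th$ and take $\uvec{v}_h$ whose only nonzero component is the element unknown $\vec{v}_{T_0}=\vec{c}$ for an arbitrary $\vec{c}\in\Real^d$, all face unknowns and all other element unknowns being zero. Only the terms indexed by $T=T_0$ then survive on the left, giving $\big(\sum_{F\in\Fh[T_0]}\meas{F}\vec{\Phi}_{T_0F}(\uvec{u}_h)\big)\SCAL\vec{c}=\big(\int_{T_0}\vec{f}\big)\SCAL\vec{c}$, and the arbitrariness of $\vec{c}$ yields \eqref{eq:balance}. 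For the equilibration \eqref{eq:equilibrated.tractions}, I fix an interface $F_0\in\Fhi$ shared by $T_1,T_2$ and take $\uvec{v}_h$ whose only nonzero component is $\vec{v}_{F_0}=\vec{c}$; this is admissible in $\vUhD$ precisely because $F_0$ is interior, so the Dirichlet constraint is untouched. On the left only the contributions from $(T_1,F_0)$ and $(T_2,F_0)$ remain while the right-hand side vanishes, leaving $-\meas{F_0}\big(\vec{\Phi}_{T_1F_0}(\uvec{u}_h)+\vec{\Phi}_{T_2F_0}(\uvec{u}_h)\big)\SCAL\vec{c}=0$; since $\meas{F_0}>0$ and $\vec{c}$ is arbitrary, \eqref{eq:equilibrated.tractions} follows.

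For the converse, I would assume \eqref{eq:balance} and \eqref{eq:equilibrated.tractions} and recover the discrete problem for an arbitrary $\uvec{v}_h\in\vUhD$ by splitting the left-hand side of the reformulated identity along the pairing $\vec{v}_T-\vec{v}_F$. The element part $\sum_{T\in\Th}\vec{v}_T\SCAL\big(\sum_{F\in\Fh[T]}\meas{F}\vec{\Phi}_{TF}(\uvec{u}_h)\big)$ equals $\sum_{T\in\Th}\vec{v}_T\SCAL\int_T\vec{f}=(\vec{f},\vec{v}_h)$ by \eqref{eq:balance}. For the face part, I exchange the order of summation via \eqref{eq:sumTF=sumFT} to get $\sum_{F\in\Fh}\meas{F}\vec{v}_F\SCAL\big(\sum_{T\in\Th[F]}\vec{\Phi}_{TF}(\uvec{u}_h)\big)$; the inner sum vanishes on interior faces by \eqref{eq:equilibrated.tractions}, and on boundary faces $\vec{v}_F=\vec{0}$ since $\uvec{v}_h\in\vUhD$, so the face part is zero. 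This gives $\mathrm{a}_h(\uvec{u}_h,\uvec{v}_h)=(\vec{f},\vec{v}_h)$, as required.

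I expect no deep obstacle: the result is essentially a reorganisation of \eqref{eq:ah:flux} through the natural degrees of freedom of $\vUhD$. The only points demanding care are the bookkeeping of signs in the $\vec{v}_T-\vec{v}_F$ pairing when isolating a single face test function, and the observation that activating a lone face unknown produces an admissible element of $\vUhD$ only for interior faces—consistently with the fact that equilibration \eqref{eq:equilibrated.tractions} is asserted precisely for $F\in\Fhi$.
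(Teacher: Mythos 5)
Your proposal is correct and follows essentially the same route as the paper: rewrite \eqref{eq:discrete} via the traction reformulation \eqref{eq:ah:flux}, then test against discrete functions supported on a single element unknown or a single interior face unknown. The only difference is that you spell out the converse direction explicitly (using \eqref{eq:sumTF=sumFT} and the vanishing of boundary face unknowns), whereas the paper leaves it implicit in the fact that these test functions span $\vUhD$.
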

\begin{proof}
  Plugging the flux reformulation \eqref{eq:ah:flux} of the bilinear form $\mathrm{a}_h$ into the discrete problem \eqref{eq:discrete}, and recalling \eqref{eq:vh}, we infer that it is equivalent to:
  Find $\uvec{u}_h\in\vUhD$ such that
  \begin{equation}\label{eq:discrete:flux}
    \sum_{T\in\Th}\sum_{F\in\Fh[T]}\meas{F}\vec{\Phi}_{TF}(\uvec{u}_h)\SCAL(\vec{v}_T-\vec{v}_F)
    = \sum_{T\in\Th}\int_T\vec{f}\SCAL\vec{v}_T
    \qquad\forall\uvec{v}_h\in\vUhD.
  \end{equation}
  Taking, for a given mesh element $T\in\Th$, $\uvec{v}_h$ such that $\vec{v}_{T'}=\vec{0}$ for all $T'\in\Th\setminus\{T\}$, $\vec{v}_F=\vec{0}$ for all $F\in\Fh$, and letting $\vec{v}_T$ span $\Poly{0}(T;\Real^d)$, \eqref{eq:discrete:flux} reduces to \eqref{eq:balance}.
  Similarly, given an interface $F\in\Fhi$ shared by the mesh elements $T_1$ and $T_2$, taking in \eqref{eq:discrete:flux} $\uvec{v}_h$ such that $\vec{v}_T=\vec{0}$ for all $T\in\Th$, $\vec{v}_{F'}=\vec{0}$ for all $F'\in\Fh\setminus\{F\}$, and letting $\vec{v}_F$ span $\Poly{0}(F;\Real^d)$, \eqref{eq:discrete:flux} reduces to \eqref{eq:equilibrated.tractions} after recalling that the numerical tractions are constant over $F$.
\end{proof}


\section*{Acknowledgements}
The work of the second author was partially supported by \emph{Agence Nationale de la Recherche} grants HHOMM (ANR-15-CE40-0005) and fast4hho (ANR-17-CE23-0019).


\bibliographystyle{plain}
\bibliography{ello}

\end{document}